\documentclass[12pt,reqno]{amsart}

\usepackage{amssymb,latexsym}
\usepackage{amsmath}
\usepackage{amsthm}
\usepackage{graphicx}
\usepackage{hyperref}
\usepackage{titletoc}
\usepackage{pdfsync}
\usepackage{color}
\usepackage{cite}

\numberwithin{equation}{section}

\newtheorem{theorem}{Theorem}[section]

\newtheorem{lemma}[theorem]{Lemma}

\theoremstyle{definition}

\newtheorem{remark}{Remark}

\def\R{{\Bbb R}}

\def\begeq{\begin{equation}}
\def\endeq{\end{equation}}

\allowdisplaybreaks

\begin{document}

\title[Some semilinear elliptic equations with Robin boundary condition]{Asymptotic behavior of solutions to elliptic problems with Robin boundary conditions}

\author{Mengyao Chen}
\address{Mengyao Chen, School of Mathematics and Systems Science \& Hubei Province Key Laboratory of Systems Science in Metallurgical Process, Wuhan University of Science and Technology, Wuhan 430065, P. R. China.}
\email{cmy@mails.ccnu.edu.cn}

\author{Massimo Grossi}
\address{Massimo Grossi, Dipartimento di Matematica, Universit\`a di Roma ``La Sapienza'', P.le A. Moro 2, 00185 Roma, Italy}
\email{massimo.grossi@uniroma1.it}

\author{Qi Li}
\address{Qi Li, School of Mathematics and Systems Science \& Hubei Province Key Laboratory of Systems Science in Metallurgical Process, Wuhan University of Science and Technology, Wuhan 430065, P. R. China.}
\email{qili@mails.ccnu.edu.cn}

\begin{abstract}
In this paper, we investigate the asymptotic behavior, as $\beta \to 0$, of positive solutions to the semilinear elliptic Robin problem
\begin{equation*}
\begin{cases}
-\Delta u = u^p, & \text{in } \Omega,\\
u > 0, & \text{in } \Omega,\\
\frac{\partial u}{\partial \nu} + \beta u = 0, & \text{on } \partial \Omega,
\end{cases}
\end{equation*}
where $p \ge 0$, $\beta > 0$, and $\Omega$ is a bounded smooth domain.

We will prove that, for all $p\ge0$, the solution $u_\beta$ behaves like a constant as $\beta\to0$. However, the value of this constant is strongly influenced by the value of $p$. Indeed,
\begin{itemize}
\item if $0 \le p < 1$, $u_\beta$ blows up uniformly in $\Omega$ as $\beta \to 0$.
\item if $p=1$ (eigenvalue problem),  $u_\beta$  converge to a constant.
\item if $p>1$ $u_\beta$  converge uniformly to zero.
\end{itemize}
In the critical and supercritical regime $p \ge \frac{N+2}{N-2}$, the existence of solutions is no longer guaranteed a priori. In this case, when $\Omega$ is a ball and $0<\beta<\frac{2}{p-1}$ we prove the existence of a radial positive solution.
\medskip

\noindent\textbf{Keywords:} elliptic equation; Robin boundary conditions; asymptotic behavior.

\medskip

\noindent\textbf{2020 Mathematics Subject Classification:} 35J20.

\end{abstract}

\date{}
\maketitle

\section{Introduction and main results}

Let $\Omega$ be a bounded smooth domain in $\R^N$, with $N\ge2$, and let $\nu$ denote the unit outward normal to $\partial\Omega$. In this paper we study the semilinear elliptic problem with Robin boundary conditions
\begin{equation}\label{eq0.1}
\begin{cases}
-\Delta u=u^p,\quad &\text{in }\Omega,\\
u>0,\quad &\text{in }\Omega,\\
\frac{\partial u}{\partial \nu}+\beta u=0,\quad &\text{on }\partial\Omega,
\end{cases}
\end{equation}
where $p\ge0$ and $\beta>0$.

Throughout the paper, we assume that when $p=1$ the problem \eqref{eq0.1} is replaced by the eigenvalue problem
\begin{equation}\label{eig}
\begin{cases}
-\Delta u=\lambda_{1,\beta}u,\quad &\text{in }\Omega,\\
u>0,\quad &\text{in }\Omega,\\
\frac{\partial u}{\partial \nu}+\beta u=0,\quad &\text{on }\partial\Omega.
\end{cases}
\end{equation}
This allows us to state the results in a unified way for all $p \ge 0$.

We work in the Sobolev space
\[
H^1(\Omega):=\{u\in L^2(\Omega): \nabla u\in L^2(\Omega)\},
\]
endowed with the norm
\[
\|u\|=\left(\int_{\Omega}\bigl(|\nabla u|^2+u^2\bigr)\,dx\right)^{1/2}.
\]
For each fixed $\beta>0$, we say that $u\in H^1(\Omega)$ is a weak solution of \eqref{eq0.1} if
\[
\int_{\Omega}\nabla u\nabla v\,dx+\beta\int_{\partial\Omega}uv\,dS
=\int_{\Omega}u^p v\,dx,
\qquad \forall v\in H^1(\Omega).
\]
Let $u_+=\max\{u,0\}$. In order to find a weak solution to \eqref{eq0.1} for $0\le p\le \frac{N+2}{N-2}$, it is enough to look for nontrivial critical points of the functional
\[
J_\beta(u)=\frac12\int_{\Omega}|\nabla u|^2\,dx+\frac{\beta}{2}\int_{\partial\Omega}u^2\,dS
-\frac{1}{p+1}\int_{\Omega}u_+^{p+1}\,dx.
\]
The corresponding solutions $u_\beta$ is a weak solution and, by elliptic regularity, a classical one.
\subsection{Background}

Robin boundary conditions interpolate between the classical Dirichlet and Neumann cases, which formally correspond to the limiting regimes $\beta=\infty$ and $\beta=0$. There is a vast literature on these two problems. The existence of solutions and their qualitative properties, such as uniqueness, nondegeneracy, and asymptotic behavior, have been widely investigated over the last few decades. Given the breadth of the subject, we do not attempt a complete survey and refer the reader, for instance, to \cite{BN,BO,GGP,GILY,GNN,GST,W,Z}  and the references therein.

Compared with the Dirichlet and Neumann problems, the Robin case has been less extensively studied, although several existence and qualitative results are available in the literature. We refer, among others, to \cite{D1,D2,BG,DF,FD,W,vB,GS} for results concerning eigenvalue estimates, existence, uniqueness, and asymptotic properties under Robin boundary conditions.

A first important distinction concerns the range of the exponent $p$. The behavior of the problem is substantially different in the sublinear, linear, subcritical, critical, and supercritical regimes, and it is useful to briefly review what is known in each case.

\subsection{Known results}

We begin with the case $p=0$, for which \eqref{eq0.1} reduces to the torsion problem
\begin{equation}\label{1.3}
\begin{cases}
-\Delta u=1,\quad &\text{in }\Omega,\\
\frac{\partial u}{\partial \nu}+\beta u=0,\quad &\text{on }\partial\Omega.
\end{cases}
\end{equation}
By the maximum principle, the solution is unique. In \cite{vB} it was proved that $u_\beta$ is bounded if and only if $\lambda_{1,\beta}>0$, where $\lambda_{1,\beta}$ denotes the first eigenvalue of $-\Delta$ under Robin boundary conditions. In addition, the estimate
\begin{equation}\label{1.4}
\lambda_{1,\beta}^{-1}\leq \|u_\beta\|_{L^\infty(\Omega)}\leq
6N\lambda_{1,\beta}^{-1}\log\bigl(2^{11}3\sqrt{3}N(1+\beta^{-1}\sqrt{\lambda_{1,\beta}})\bigr)
\end{equation}
was obtained.

The range $0\le p<1$ corresponds to a sublinear nonlinearity and essentially exhibits the same phenomena as the torsion problem. In this regime the equation has a regularizing character, and one expects positive solutions to exist for every $\beta>0$. At the same time, as $\beta\to0$, the Robin condition approaches the Neumann one, and this suggests the onset of a blow-up phenomenon. One of the main goals of this paper is precisely to describe this behavior and to establish the exact leading-order asymptotics of positive solutions in this range of exponents.

When $p=1$, problem \eqref{eq0.1} becomes closely related to the Robin eigenvalue problem
\begin{equation}\label{1.5}
\begin{cases}
-\Delta u=\lambda_{1,\beta}u,\quad &\text{in }\Omega,\\
u>0,\quad &\text{in }\Omega,\\
\frac{\partial u}{\partial \nu}+\beta u=0,\quad &\text{on }\partial\Omega.
\end{cases}
\end{equation}
For each $\beta>0$, Daners \cite{D1} proved the existence of a positive lower bound for $\lambda_{1,\beta}$, depending on $N$, $\beta$, and $|\Omega|$. Indeed,  by Lemma \ref{lemma2.2} we have  
$$
\lambda_{1,\beta}\le\beta\frac{|\partial\Omega|}{|\Omega|},
$$
which implies that $\lambda_{1,\beta}\to0$ as $\beta\to0$.
Later, it was shown in \cite{D2} that
\[
\lambda_{1,\beta}\ge \bar{\lambda}_1,
\]
where $\bar{\lambda}_1$ is the first Robin eigenvalue of a ball having the same measure as $\Omega$. Bucur and Giacomini \cite{BG} obtained related results for sublinear problems, showing in particular how the spectral properties of the Robin Laplacian influence the qualitative behavior of solutions when $0\le p<1$.

For subcritical exponents,
\[
1<p<\frac{N+2}{N-2},
\]
one may consider the variational quantity
\begin{equation}\label{defIbeta}
I_\beta=\inf_{u\in H^1(\Omega)\setminus\{0\}}
\frac{\int_{\Omega}|\nabla u|^2\,dx+\beta\int_{\partial\Omega}u^2\,dS}
{\left(\int_{\Omega}|u|^{p+1}\,dx\right)^{\frac{2}{p+1}}}.
\end{equation}
By arguments similar to those in \cite{Z}, one can verify that $I_\beta$ is achieved  by the compactness of the Sobolev embeddings
, and then a suitable scaling yields a positive solution of \eqref{eq0.1}. On the other hand, uniqueness is more delicate and  in general depending on the shape of the domain. In \cite{DF}, using a priori estimates together with a careful asymptotic analysis, Dai and Fu proved uniqueness of positive solutions when $\beta$ is sufficiently small. In \cite{FD}, they studied the problem in annular domains and showed that the positive solution is unique for $\beta$ small enough, while uniqueness fails for $\beta$ sufficiently large. The question remains open for arbitrary $\beta>0$, even in the case where $\Omega$ is a ball. We also note that when $\beta$ is large, the Robin condition approaches the Dirichlet one; therefore, in domains where uniqueness is known for the corresponding Dirichlet problem, one may expect uniqueness for sufficiently large $\beta$ as well. 

For the critical exponent
\[
p=2^*-1,\qquad 2^*=\frac{2N}{N-2},
\]
the embedding $H^1(\Omega)\hookrightarrow L^{2^*}(\Omega)$ is no longer compact, and therefore the functional $J_\beta$ does not satisfy the Palais--Smale condition globally. In \cite{W}, Wang proved that $J_\beta$ satisfies the $(PS)_c$ condition provided
\[
0<c<\frac{1}{2N}\mathcal{S}^{N/2},
\]
where $\mathcal{S}$ denotes the best Sobolev constant for the embedding $D^{1,2}(\R^N)\hookrightarrow L^{2^*}(\R^N)$. As a consequence, he showed that \eqref{eq0.1} admits a positive solution if $\beta$ is sufficiently small. In addition, when $\Omega=B_1(0)$, by means of the Pohozaev identity he proved that \eqref{eq0.1} admits a radial solution if and only if $\beta\in(0,N-2)$.

We now turn to the supercritical regime
\[
p>2^*-1.
\]
To the best of our knowledge, there are no general existence results for the supercritical Robin problem \eqref{eq0.1}. Motivated by the ideas in \cite{W}, and also by related results of Serrin and Zou \cite{SZ}, we prove an existence result in the radial setting when $\Omega$ is a ball.

\subsection{Main results}
The main purpose of this paper is to study the asymptotic behavior of positive solutions of \eqref{eq0.1} as $\beta\to0$ for all $p\ge0$. According to the previous discussion, existence is known in the relevant ranges at least for $\beta>0$ sufficiently small, and our analysis describes the precise asymptotic profile of such solutions.

Before presenting the asymptotic results, we first establish the uniqueness of positive solutions.  
\begin{theorem}\label{Theorem0.1}
Suppose that $p\ge 0$. We have,
\begin{enumerate}
\item  if $0\le p<1$ then the problem \eqref{eq0.1} has exactly one positive solution for each $\beta>0$;
\item if $p=1$ then the eigenvalue problem has  exactly one positive solution for  each $\beta>0$ and $\|u_\beta\|_{L^\infty(\Omega)}=1$;
\item if $1<p<2^*-1$ then the problem \eqref{eq0.1} has exactly one positive solution for $\beta$ small enough;
\item if $p=2^*-1$, then equation \eqref{eq0.1} has exactly  one ground state solution for $\beta$ small enough;
\item  if $p\ge2^*-1$ and $\Omega$ is a ball, then equation \eqref{eq0.1} has exactly  one positive radial solution for $\beta$ small enough.
\end{enumerate}
\end{theorem}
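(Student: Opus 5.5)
The plan is to treat the five items separately, since they rest on different mechanisms.

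\emph{Item (1), $0\le p<1$.} Existence comes from minimizing $J_\beta$ on $H^1(\Omega)$. Since $\beta>0$, the quadratic form $\int|\nabla u|^2+\beta\int_{\partial\Omega}u^2$ is coercive: it controls $\|u\|^2$ by a Friedrichs-type inequality, with constant proportional to $\lambda_{1,\beta}>0$. The term $\int u_+^{p+1}$ is of lower order because $p+1<2$. Hence $J_\beta$ is coercive and weakly lower semicontinuous, and its minimum is negative, as one sees by testing with a small constant. A minimizer is therefore nontrivial, and after testing with $u_-$ it is nonnegative and hence positive by the strong maximum principle and Hopf's lemma. For $p=0$, uniqueness is immediate from linearity. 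For $0<p<1$ I will use the Brezis--Oswald argument. Given two positive solutions $u,v$, test the equation for $u$ with $(u^2-v^2)/u$ and the equation for $v$ with $(v^2-u^2)/v$, then subtract. The Picone-type identity shows that the gradient terms combine into a nonnegative quantity, and the boundary terms cancel exactly:
\[
\beta u\cdot\frac{u^2-v^2}{u}+\beta v\cdot\frac{v^2-u^2}{v}=0.
\]
What remains is
\[
\int_\Omega\bigl(u^{p-1}-v^{p-1}\bigr)(u^2-v^2)\le0.
\]
Since $t\mapsto t^{p-1}$ is strictly decreasing, the integrand is $\ge 0$, so it vanishes identically and $u=v$. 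The test functions are admissible because $u,v\in C^1(\overline\Omega)$ are positive up to the boundary, and therefore bounded below by a positive constant.

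\emph{Item (2), $p=1$.} The first Robin eigenvalue is simple and has a positive eigenfunction; this follows from the variational characterization together with the strong maximum principle. Any positive eigenfunction is orthogonal to nothing positive, so it must belong to $\lambda_{1,\beta}$. The statement $\|u_\beta\|_\infty=1$ is the normalization that selects a single element of the one-dimensional eigenspace.

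\emph{Items (3)--(5), $p>1$.} Here uniqueness holds only for small $\beta$, and I would prove it by contradiction through an asymptotic analysis. Suppose there are $\beta_n\to0$ and two distinct positive solutions $u_n\ne v_n$ (ground states in case (4), radial solutions in case (5)). The steps are as follows.
\begin{enumerate}
\item Integrating the equation gives $\int_\Omega u_n^p=\beta_n\int_{\partial\Omega}u_n$.
\item Derive a priori bounds and show that $u_n\to0$ uniformly. In the subcritical case I would use Gidas--Spruck blow-up adapted to the Robin boundary. For ground states in the critical case I would use the energy bound below $\frac1N\mathcal S^{N/2}$, which rules out concentration. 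In the radial case I would use ODE estimates.
\item Rescale by setting $w_n=u_n/\|u_n\|_\infty$. Then $w_n\to1$ in $C^1$, since the limit is a nonnegative Neumann harmonic function with maximum $1$. The integral identity then gives $\|u_n\|_\infty^{p-1}\sim\beta_n|\partial\Omega|/|\Omega|$, and this holds with the same leading constant for $v_n$.
\item Set $z_n=(u_n-v_n)/\|u_n-v_n\|_\infty$. It solves a linearized equation
\[
-\Delta z_n=c_n(x)z_n,\qquad c_n\approx p\,u_n^{p-1}\approx p\,\beta_n\frac{|\partial\Omega|}{|\Omega|},
\]
with the same Robin condition. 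Then $z_n\to z$, a nonzero constant.
\item Integrate the equation for $z_n$ and expand to first order in $\beta_n$. This yields
\[
\int c_n z_n=\beta_n\int_{\partial\Omega}z_n,
\]
which in the limit reads $p\,|\partial\Omega|\,z=|\partial\Omega|\,z$. Since $p\ne1$, this forces $z=0$, a contradiction.
\end{enumerate}

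The main obstacle I expect is step (v). Both sides of the identity are of order $\beta_n$, so I need a precise relation $c_n=p\,u_n^{p-1}(1+o(1))$, uniformly. For this I will check that $u_n/\|u_n\|_\infty$ and $v_n/\|u_n\|_\infty$ both converge to $1$ uniformly. That in turn requires $\|u_n\|_\infty/\|v_n\|_\infty\to1$, which comes from the asymptotics in step (iii). The other delicate point is the a priori bound in step (ii) for the critical and supercritical cases; there I rely on the ground-state energy bound or on radial symmetry, as indicated.
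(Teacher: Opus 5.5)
Items (1) and (2) follow the paper: the Br\'ezis--Oswald argument with test functions $u-v^2/u$ and $v-u^2/v$, which are admissible by the positive lower bound of Lemma~\ref{Lemma2.1}, and the standard simplicity of $\lambda_{1,\beta}$. Your minimization argument also supplies existence for $0\le p<1$, which the paper takes for granted.

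Fix the signs in (1), though. The two tested identities must be \emph{added}, not subtracted; adding is what cancels the boundary terms. This gives
\[
\int_\Omega (u^{p-1}-v^{p-1})(u^2-v^2)\,dx=\int_\Omega\Bigl(\Bigl|\nabla u-\frac{u}{v}\nabla v\Bigr|^2+\Bigl|\nabla v-\frac{v}{u}\nabla u\Bigr|^2\Bigr)dx\ge 0,
\]
whereas the monotonicity of $t\mapsto t^{p-1}$ makes the integrand $\le 0$. You wrote both inequalities reversed. The two errors cancel, so $u=v$ still follows.

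For $p>1$ your skeleton matches the paper's: uniform bounds force $u_n,v_n\to0$ uniformly, and the normalized difference $z_n$ tends to $\pm1$. You close the argument differently, however.
\begin{itemize}
\item The paper tests each equation with the other solution and gets $\int_\Omega u_nv_n(u_n^{p-1}-v_n^{p-1})\,dx=0$, valid for any two solutions. Once $z_n\to\pm1$ uniformly, $u_n-v_n$ has a fixed sign. For $p>1$ the integrand has the sign of $u_n-v_n$, so it is strictly of one sign, a contradiction. This needs only $c_n\to0$ uniformly. Your step (iii), which is essentially Theorem~\ref{Theorem1.4}, and the uniform expansion $c_n=p\|u_n\|_\infty^{p-1}(1+o(1))$ you single out as the main obstacle are then unnecessary.
\item Your route is still correct. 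The common asymptotics $\|u_n\|_\infty^{p-1}\sim\|v_n\|_\infty^{p-1}\sim\beta_n|\partial\Omega|/|\Omega|$ give $v_n/\|u_n\|_\infty\to1$ uniformly, hence the expansion of $c_n$. Dividing $\int_\Omega c_nz_n\,dx=\beta_n\int_{\partial\Omega}z_n\,dS$ by $\beta_n$ then yields $pz=z$.
\end{itemize}
Your version shows the mechanism as nondegeneracy of the linearization at the constant limit profile, and it uses only $p\neq1$ rather than the monotonicity of $t^{p-1}$. The paper's version buys economy.

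Two minor points. First, in the critical case the threshold that excludes concentration is $\frac{1}{2N}\mathcal{S}^{N/2}$, not $\frac1N\mathcal{S}^{N/2}$, because boundary blow-up carries only half a bubble. This is harmless, since testing with a constant gives $m_\beta\to0$. Second, existence in (3)--(5) is not part of your plan. It has to be imported from Section~3 and the cited literature, as the paper implicitly does.
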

We now begin to discuss the asymptotic behavior of the solution $u_\beta$ for small $\beta$ corresponding to the various values of $p$.

First let us consider $p\in[0,1]$. Then the unique solution $u_\beta$ of \eqref{eq0.1} satisfies
$$
u_\beta\sim C(\beta),
$$
where $C(\beta)$ is a constant that tends to $0$, $1$, or infinity depending on the value of $p$.
Below is the precise statement.
\begin{theorem}\label{Theorem1.2}
Suppose that $p\in[0,1)$ and $N\ge2$. If $u_\beta$ is a positive solution to equation \eqref{eq0.1}, then
\[
u_\beta=
|\Omega|^{\frac{1}{1-p}}|\partial\Omega|^{-\frac{1}{1-p}}
\beta^{-\frac{1}{1-p}}
+o\bigl(\beta^{-\frac{1}{1-p}}\bigr),
\qquad \text{as }\beta\to0,
\]
where $|\Omega|$ and $|\partial\Omega|$ denote the volume of $\Omega$ and the surface measure of $\partial\Omega$, respectively.
\end{theorem}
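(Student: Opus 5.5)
The plan is to rescale and reduce the statement to a Neumann-type limit. Set $M_\beta=\|u_\beta\|_{L^\infty(\Omega)}$ and $v_\beta=u_\beta/M_\beta$, so that $\|v_\beta\|_\infty=1$ and
\[
-\Delta v_\beta=M_\beta^{p-1}v_\beta^p \ \text{in }\Omega,\qquad \partial_\nu v_\beta+\beta v_\beta=0\ \text{on }\partial\Omega .
\]
Integrating the equation over $\Omega$ gives the basic identity
\begin{equation*}
\beta\int_{\partial\Omega}u_\beta\,dS=\int_\Omega u_\beta^p\,dx ,
\end{equation*}
which is where the constant $|\Omega|/|\partial\Omega|$ comes from. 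Indeed, if $u_\beta=c_\beta(1+o(1))$ uniformly, then $\beta c_\beta|\partial\Omega|\sim c_\beta^p|\Omega|$, that is, $c_\beta^{1-p}\sim|\Omega|/(\beta|\partial\Omega|)$. This is exactly the claimed expansion. The proof therefore reduces to showing that $v_\beta\to1$ uniformly (then $u_\beta=M_\beta v_\beta$ and the identity pins down $M_\beta$).

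Step 1 is to show $M_\beta\to\infty$ and, more precisely, $M_\beta^{p-1}\to0$. Test the equation against the Robin first eigenfunction $\varphi_\beta$, normalized by $\|\varphi_\beta\|_\infty=1$ (this is (2) of Theorem \ref{Theorem0.1}). Green's formula gives
\[
\lambda_{1,\beta}\int u_\beta\varphi_\beta=\int u_\beta^p\varphi_\beta\ge M_\beta^{p-1}\int u_\beta\varphi_\beta ,
\]
since $u_\beta^p=u_\beta^{p-1}u_\beta\ge M_\beta^{p-1}u_\beta$ when $p<1$. Hence $M_\beta^{p-1}\le\lambda_{1,\beta}\le\beta|\partial\Omega|/|\Omega|\to0$, by Lemma \ref{lemma2.2}.

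Step 2 is compactness. The right-hand side $M_\beta^{p-1}v_\beta^p$ is bounded by $M_\beta^{p-1}\to0$, and the boundary data $\beta v_\beta$ is also $O(\beta)$. Standard $W^{2,q}$ estimates for the Robin/Neumann problem, applied after subtracting the mean, give that $v_\beta-\bar v_\beta\to0$ in $C^{1,\alpha}(\overline\Omega)$. Concretely, $\nabla v_\beta\to0$ uniformly: one tests against $v_\beta$ to get $\int|\nabla v_\beta|^2\le M_\beta^{p-1}|\Omega|\to0$, then bootstraps using Neumann regularity. Since $\max v_\beta=1$, this yields $v_\beta\to1$ uniformly, i.e. $u_\beta=M_\beta(1+o(1))$ uniformly in $\Omega$.

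Step 3 is to plug into the identity:
\[
\beta M_\beta|\partial\Omega|(1+o(1))=M_\beta^p|\Omega|(1+o(1)).
\]
This gives $M_\beta=(|\Omega|/(|\partial\Omega|\beta))^{1/(1-p)}(1+o(1))$, and therefore
\[
u_\beta=M_\beta(1+o(1))=|\Omega|^{\frac1{1-p}}|\partial\Omega|^{-\frac1{1-p}}\beta^{-\frac1{1-p}}+o\bigl(\beta^{-\frac1{1-p}}\bigr).
\]

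The main obstacle is Step 2: obtaining estimates that are uniform in $\beta$ for the Robin problem near the Neumann limit. The concern is that regularity constants could degenerate because the Neumann operator has a kernel. I would handle this by writing $w_\beta=v_\beta-\bar v_\beta$, which solves a Neumann problem with small interior and boundary data together with the compatibility condition above. The Neumann estimate $\|w\|_{W^{2,q}}\le C(\|\Delta w\|_q+\|\partial_\nu w\|_{W^{1-1/q,q}(\partial\Omega)})$ for zero-mean $w$ then has constants independent of $\beta$. The boundary term $\beta v_\beta$ needs a $W^{1,q}$ bound on $v_\beta$, which follows from the $H^1$ bound via a short bootstrap.
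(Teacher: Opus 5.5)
Your proposal is correct and follows essentially the same route as the paper. The eigenfunction test giving $M_\beta^{p-1}\le\lambda_{1,\beta}\le\beta|\partial\Omega|/|\Omega|$ is Lemma \ref{Lemma3.1}, and the paper likewise normalizes, passes to the Neumann limit to get $\tilde u_\beta\to1$, and concludes from $\beta\int_{\partial\Omega}u_\beta\,dS=\int_\Omega u_\beta^p\,dx$. Your zero-mean Neumann $W^{2,q}$ estimate in Step 2 is a more careful, quantitative version of the compactness step that the paper simply asserts via elliptic regularity.
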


\begin{remark}
For $p=0$, it was proved in \cite{GS} that
\begin{equation}\label{1.10}
\lim_{\beta\to0}\frac{\lambda_{1,\beta}}{\beta}=\frac{|\partial\Omega|}{|\Omega|}.
\end{equation}
Combining \eqref{1.4} and \eqref{1.10}, one obtains
\[
|\Omega|\,|\partial\Omega|^{-1}\beta^{-1}
\le \|u_\beta\|_{L^\infty(\Omega)}
\le \frac{C|\log\beta|}{\beta},
\qquad \text{as }\beta\to0.
\]
We would like to point out that we can give a more precise leading order estimate.
Thus, Theorem \ref{Theorem1.2} improves the estimate obtained in \cite{vB}.
\end{remark}

The linear case requires a separate discussion where the associated first Robin eigenfunction is considered.

\begin{theorem}\label{Theorem1.3}
Suppose that $u_\beta>0$ is the first eigenfunction of
\begin{equation*}
\begin{cases}
-\Delta u=\lambda_{1,\beta}u,\quad &\text{in }\Omega,\\
\frac{\partial u}{\partial \nu}+\beta u=0,\quad &\text{on }\partial\Omega,\\
\|u\|_{L^\infty(\Omega)}=1.
\end{cases}
\end{equation*}
Then
\[
u_\beta\to1,
\qquad \text{as }\beta\to0.
\]
\end{theorem}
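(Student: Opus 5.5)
We must show that the normalized first Robin eigenfunction $u_\beta$, with $\|u_\beta\|_{L^\infty(\Omega)}=1$, converges to $1$ (uniformly in $\Omega$) as $\beta\to0$. The plan is a compactness argument: get uniform bounds, pass to a limit solving the Neumann problem with eigenvalue $0$, and use the normalization to identify the limiting constant as $1$.

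\emph{Step 1: the eigenvalue tends to zero.} By Lemma \ref{lemma2.2}, testing the Rayleigh quotient with the constant function $1$ gives
\[
0<\lambda_{1,\beta}\le \beta\frac{|\partial\Omega|}{|\Omega|}.
\]
Hence $\lambda_{1,\beta}\to0$.

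\emph{Step 2: uniform bounds.} Since $0<u_\beta\le1$, we have $\|u_\beta\|_{L^2(\Omega)}\le|\Omega|^{1/2}$. Testing the equation with $u_\beta$ gives
\[
\int_\Omega|\nabla u_\beta|^2\,dx+\beta\int_{\partial\Omega}u_\beta^2\,dS=\lambda_{1,\beta}\int_\Omega u_\beta^2\,dx\le \lambda_{1,\beta}|\Omega|\to0 .
\]
So $u_\beta$ is bounded in $H^1(\Omega)$ and $\|\nabla u_\beta\|_{L^2(\Omega)}\to0$.

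For uniform convergence we need more regularity. The right-hand side $\lambda_{1,\beta}u_\beta$ is bounded in $L^\infty(\Omega)$, and the Robin datum $\partial_\nu u_\beta=-\beta u_\beta$ has coefficient $\beta\in(0,1]$. Standard $W^{2,q}$ estimates for the Neumann/oblique problem then bound $\|u_\beta\|_{W^{2,q}(\Omega)}$ uniformly for large $q$; one may need to bootstrap through the trace of $u_\beta$. Alternatively, De Giorgi--Nash--Moser estimates up to the boundary for the Robin problem give a uniform $C^{0,\alpha}(\overline\Omega)$ bound. Either way, $\{u_\beta\}$ is precompact in $C(\overline\Omega)$.

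\emph{Step 3: identify the limit.} Take any sequence $\beta_n\to0$. Along a subsequence, $u_{\beta_n}\to u$ uniformly on $\overline\Omega$ and weakly in $H^1(\Omega)$. By weak lower semicontinuity, $\nabla u=0$, and $\Omega$ is connected, so $u\equiv c$ for some constant $c$. Uniform convergence preserves the sup norm, so $c=\|u\|_{L^\infty(\Omega)}=\lim\|u_{\beta_n}\|_{L^\infty(\Omega)}=1$. Since the limit is the same for every subsequence, the whole family satisfies $u_\beta\to1$ uniformly as $\beta\to0$.

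\emph{Main obstacle.} The delicate point is the uniform boundary regularity in Step 2, which is what lets the sup norm pass to the limit. Here $H^1$ convergence alone does not suffice: it gives only $L^2$ convergence to a constant $c$, and $\|u_\beta\|_{L^\infty(\Omega)}=1$ does not by itself force $c=1$. I would first try the $W^{2,q}$ route, treating $-\beta u_\beta$ as Neumann data bounded in $W^{1-1/q,q}(\partial\Omega)$ by the trace of a $W^{1,q}$ bound, and then bootstrapping from $H^1$. If that becomes cumbersome, I would fall back on the Hölder estimate for Robin problems with bounded coefficient.
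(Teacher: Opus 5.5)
Your proof is correct and follows the same route as the paper. The energy identity together with Lemma \ref{lemma2.2} gives $\|\nabla u_\beta\|_{L^2(\Omega)}\to0$, the limit is therefore a constant, and the normalization $\|u_\beta\|_{L^\infty(\Omega)}=1$ forces that constant to be $1$. Your Step 2, the uniform $C(\overline{\Omega})$ compactness, is something the paper uses without proving it: from $\nabla u_\beta\to0$ in $L^2$ alone one cannot conclude ``if $C<1$ then $\sup u_\beta<1$'', so your version is the more complete one.
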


Next we consider the superlinear case.

Although our results will hold for all $p>1$, it is important to emphasize that there is a significant difference between the subcritical case $1<p<\frac{N+2}{N-2}$ and the critical and supercritical case $p\ge\frac{N+2}{N-2}$. Indeed, while in the subcritical case it is fairly standard to deduce the existence of a solution of \eqref{eq0.1}, in the critical and supercritical case, the existence of solutions is a more delicate issue. Indeed, the lack of compactness (in the critical case), or even of the Sobolev embedding (in the supercritical case), prevents the use of standard variational methods. In this framework, our first result concerns the existence of a solution for $p>\frac{N+2}{N-2}$ in the case of the ball.
\begin{theorem}\label{Theorem1.1}
Suppose that
\[
p>\frac{N+2}{N-2},\qquad 0<\beta<\frac{2}{p-1},
\]
and assume that $\Omega$ is a ball. Then equation \eqref{eq0.1} admits a radial positive solution.
\end{theorem}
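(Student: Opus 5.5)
Since $\Omega$ is a ball, say $B_1(0)$ after rescaling (the Robin condition $\partial_\nu u+\beta u=0$ on $\partial B_R$ becomes $\partial_\nu v+\beta R\,v=0$ for $v(x)=R^{2/(p-1)}u(Rx)$), I would use a shooting argument in the radial variable. For $\alpha>0$ let $w_\alpha(r)$ solve
\[
w''+\frac{N-1}{r}w'+|w|^{p-1}w=0,\qquad w(0)=\alpha,\quad w'(0)=0 .
\]
By scaling, $w_\alpha(r)=\alpha\,w_1(\alpha^{(p-1)/2}r)$, so everything reduces to the single entire solution $w:=w_1$. Classical results for $p>\frac{N+2}{N-2}$ (Joseph--Lundgren, and the Pohozaev-type analysis used by Serrin--Zou \cite{SZ}) give that $w>0$ on $[0,\infty)$, $w'<0$, and $w$ decays like the singular solution $w_s(r)=L\,r^{-2/(p-1)}$, with $L^{p-1}=\frac{2}{p-1}\bigl(N-2-\frac{2}{p-1}\bigr)$. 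The function $u(x)=w_\alpha(|x|)$ is then a positive radial solution of the equation in $B_1$, and it satisfies the Robin condition exactly when
\[
\frac{-w_\alpha'(1)}{w_\alpha(1)}=\beta
\iff
g(s):=\frac{-s\,w'(s)}{w(s)}=\beta,\qquad s=\alpha^{(p-1)/2}.
\]
So it suffices to show that $g:(0,\infty)\to(0,\infty)$ takes the value $\beta$ for every $\beta\in\bigl(0,\frac{2}{p-1}\bigr)$.

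The plan has three steps. First, $g$ is continuous, and $g(s)\to0$ as $s\to0$ because $w'(s)=O(s)$ near the origin. Second, I would show that $g(s)\to\frac{2}{p-1}$ as $s\to\infty$. Third, since $g$ tends to $0$ at one end and to $\frac{2}{p-1}$ at the other, the intermediate value theorem gives some $s$ with $g(s)=\beta$. Taking $\alpha=s^{2/(p-1)}$ then yields the radial solution, which is positive because $w>0$.

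The main obstacle is the second step. It requires the asymptotics $w(r)\sim L r^{-2/(p-1)}$ together with the corresponding derivative asymptotics $r w'(r)/w(r)\to-\frac{2}{p-1}$. I would proceed via the Emden--Fowler change of variables $t=\log r$, $v(t)=r^{2/(p-1)}w(r)$. This turns the equation into an autonomous damped ODE, with damping coefficient $N-2-\frac{4}{p-1}>0$ in the supercritical range, and the constant $L$ is its nonzero equilibrium. A Lyapunov energy argument then shows that $(v,v')\to(L,0)$ as $t\to\infty$. Since
\[
g=\frac{2}{p-1}-\frac{v'}{v},
\]
this gives the limit $\frac{2}{p-1}$. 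The delicate points inside this step are:
\begin{itemize}
\item excluding convergence of $v$ to $0$, which is handled by the energy identity since the trajectory starts from the equilibrium $0$ at $t=-\infty$ and the energy strictly decreases;
\item the positivity of $w$ for all $r$, which holds for supercritical $p$ by the Pohozaev identity.
\end{itemize}

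Even if $g$ oscillates around $\frac{2}{p-1}$ (the case $p<p_{JL}$), the range of $g$ still contains $\bigl(0,\frac{2}{p-1}\bigr)$, so the argument is unaffected.
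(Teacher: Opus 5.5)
Your proposal is correct and follows essentially the same route as the paper. Both arguments scale the single entire radial solution $U$ (your $w$) and use the Emden--Fowler variable $v(t)=r^{2/(p-1)}U(r)$ to get $rU'(r)/U(r)\to-\frac{2}{p-1}$; the paper cites Rupflin--Struwe for $(v,v')\to(b^{1/(p-1)},0)$, where you sketch the Lyapunov argument. The paper then applies the intermediate value theorem to $\delta U'(\delta)+\beta U(\delta)$, which after dividing by $U(\delta)>0$ is exactly your equation $g(\delta)=\beta$. One small point: as your own scaling remark shows, on a ball of radius $R$ the effective parameter is $\beta R$. So the argument (yours, and the paper's, which takes $\Omega=B_1(0)$ ``without loss of generality'') actually needs $\beta R<\frac{2}{p-1}$, and this follows from $\beta<\frac{2}{p-1}$ only when $R\le 1$.
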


The proof of Theorem \ref{Theorem1.1} will be given in Section \ref{s5}. The problem of the existence of solutions to \eqref{eq0.1} in general domains is far from being understood.

On the other hand, assuming the existence of a solution $u_\beta$ in $\Omega$ in the supercritical case, the asymptotic behavior turns out to be the same as in the subcritical case,

\begin{theorem}\label{Theorem1.4}
Suppose that $p>1$, $N\ge2$ and $u_\beta$ is a positive solution to equation \eqref{eq0.1}. 
If one of the following three conditions hold:
\begin{enumerate}
\item $1<p<2^*-1$;
\item $p=2^*-1$ and $u_\beta$ is a ground state solution;
\item $p\ge 2^*-1$, $\Omega$ is a ball and $u_\beta$ is radial.
\end{enumerate}
Then we have 
\[
u_\beta=
|\Omega|^{-\frac{1}{p-1}}|\partial\Omega|^{\frac{1}{p-1}}
\beta^{\frac{1}{p-1}}
+o\bigl(\beta^{\frac{1}{p-1}}\bigr),
\qquad \text{as }\beta\to0.
\]
\end{theorem}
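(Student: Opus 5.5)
Normalize $u_\beta$ by its maximum, show the normalized function tends to $1$, and identify the constant by integrating the equation over $\Omega$.

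Set $M_\beta=\|u_\beta\|_{L^\infty(\Omega)}$ and $v_\beta=u_\beta/M_\beta$. Then
\[
-\Delta v_\beta=M_\beta^{p-1}v_\beta^p,\qquad \frac{\partial v_\beta}{\partial\nu}+\beta v_\beta=0,\qquad \|v_\beta\|_\infty=1 .
\]
Integrating the equation over $\Omega$ gives the identity
\[
\beta\int_{\partial\Omega}u_\beta\,dS=\int_\Omega u_\beta^p\,dx,
\]
that is, $\beta\int_{\partial\Omega}v_\beta=M_\beta^{p-1}\int_\Omega v_\beta^p$. If we know $v_\beta\to1$ uniformly on $\overline\Omega$, this yields
\[
M_\beta^{p-1}=\beta\,\frac{|\partial\Omega|}{|\Omega|}\,(1+o(1)),
\]
which is exactly the claimed expansion $u_\beta=M_\beta(1+o(1))$. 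So everything reduces to two facts: (a) $\varepsilon_\beta:=M_\beta^{p-1}\to0$, and (b) $v_\beta\to1$ uniformly.

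For (a), in case (1) I would use the variational characterization. By Theorem \ref{Theorem0.1}, $u_\beta$ is the unique positive solution for small $\beta$, so it coincides with the minimizer for $I_\beta$. Testing $I_\beta$ with the constant $1$ gives
\[
I_\beta\le \beta|\partial\Omega|\,|\Omega|^{-2/(p+1)},
\]
and the energy identity $\int|\nabla u_\beta|^2+\beta\int_{\partial\Omega}u_\beta^2=\int u_\beta^{p+1}$ then gives $\|u_\beta\|_{L^{p+1}}^{p-1}=I_\beta\to0$. To pass from this $L^{p+1}$ smallness to $L^\infty$ smallness, I would apply Moser iteration (or a Brezis--Kato argument) to
\[
-\Delta u=a(x)u,\qquad a=u^{p-1},\qquad \|a\|_{L^{(p+1)/(p-1)}}\to0,\qquad \frac{p+1}{p-1}>\frac N2 \text{ (subcritical)}.
\]
The Robin boundary term has the good sign, so we get $\|u_\beta\|_\infty\le C\|u_\beta\|_{L^{p+1}}\to0$.

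In case (2), the ground state has energy below $\frac1N \mathcal S^{N/2}$-type levels and is again a minimizer for $I_\beta$, so $\|u_\beta\|_{2^*}^{p-1}=I_\beta\to0$. An $L^\infty$ bound then requires an $\epsilon$-regularity/Moser argument valid when $\|u\|_{L^{2^*}}$ is small; this holds since smallness of $\|a\|_{L^{N/2}}$ suffices to start the iteration.

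In case (3), the ball with radial $u$, I would use ODE arguments. The radial solution is decreasing, so $M_\beta=u(0)$. A Pohozaev-type or comparison argument with the Emden--Fowler flow should show $u(0)\to0$ as $\beta\to0$. Concretely, the boundary condition $u'(R)=-\beta u(R)$ forces the scaled solution $w(r)=u(0)^{-1}u(r/\sqrt{\varepsilon})$-type profile to reach nearly zero slope at $R\sqrt{\varepsilon}$; for a solution of the entire Lane--Emden ODE this only happens near $r=0$, forcing $\varepsilon\to0$. This step is the main obstacle, and I would try to obtain it via the shooting description used to prove Theorem \ref{Theorem1.1}.

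For (b), once $\varepsilon_\beta\to0$ the right-hand side $\varepsilon_\beta v_\beta^p$ is uniformly $o(1)$ and $\beta\to0$. Elliptic $C^{1,\alpha}$ estimates up to the boundary give compactness: along subsequences, $v_\beta\to v$ in $C^1(\overline\Omega)$, where $v$ is harmonic with zero Neumann data. Hence $v$ is constant, and $\max v=1$ gives $v\equiv1$. By uniqueness of the limit, the whole family converges, and we conclude.
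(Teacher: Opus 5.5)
Your skeleton (normalize by $M_\beta$, show $M_\beta\to0$, show $v_\beta\to1$, integrate the equation) is the same as the paper's. Step (b) and the final identification of the constant are fine.

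\textbf{The gap: case (3).} This is exactly the case with no variational control. You do not prove $M_\beta=u_\beta(0)\to0$, and the heuristic you offer does not work as stated. With $\varepsilon=u_\beta(0)^{p-1}$, your rescaled profile is the entire Lane--Emden solution $U$ with $U(0)=1$ (by uniqueness for the regular radial initial value problem). The Robin condition at $\rho=R\sqrt{\varepsilon}$ then reads $U'(\rho)=-\beta R\,U(\rho)/\rho$.

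A ``nearly zero slope'' also occurs as $\rho\to\infty$, since $U'(\rho)\to0$ there. So the slope alone only rules out $\varepsilon$ staying in a compact subset of $(0,\infty)$. It says nothing against blow-up $\varepsilon\to\infty$, which is the real danger for $p\ge 2^*-1$.

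The quantity to use is the scale-invariant logarithmic derivative $h(\rho)=-\rho U'(\rho)/U(\rho)$. The boundary condition is exactly $h(\rho)=\beta R$. Moreover:
\begin{itemize}
\item $h$ is continuous and positive on $(0,\infty)$, since $U'<0$;
\item $h$ has a positive limit at infinity: $\frac{2}{p-1}$ for $p>2^*-1$ by \eqref{li-2.10}--\eqref{li-2.11}, and $N-2$ for the Aubin--Talenti profile when $p=2^*-1$.
\end{itemize}
Hence $\inf_{\rho\ge\rho_0}h>0$ for every $\rho_0>0$, so $\beta\to0$ forces $\rho\to0$, i.e.\ $M_\beta\to0$. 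The paper argues differently in Lemma \ref{lemma2.6}. It starts from $\beta|\partial B_1|u_\beta(1)=\int_{B_1}u_\beta^p$, which gives $u_\beta(1)<(N\beta)^{1/(p-1)}$ and $\int_{B_1}u_\beta^p=O(\beta^{p/(p-1)})$, and then runs a contradiction argument on $u_\beta/\|u_\beta\|_\infty$.

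\textbf{Cases (1)--(2): a different route.} The paper first proves a $\beta$-independent $L^\infty$ bound by blow-up and Liouville theorems (Lemmas \ref{lemma2.7} and \ref{lemma2.8}); in the critical case this uses the energy comparison $m_\beta\to0$ versus $\frac{1}{2N}\mathcal{S}^{N/2}$. Theorem \ref{lemma2.5} then gives $u_\beta\to u_0$ with $-\Delta u_0=u_0^p$ and $\partial_\nu u_0=0$, whence $\int_\Omega u_0^p=0$. You instead get smallness of $\|u_\beta\|_{L^{p+1}}$ from the energy and upgrade it by Moser/Brezis--Kato, dropping the Robin term so the constants are independent of $\beta$.

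In case (2) this is self-contained and arguably cleaner. It needs only $\frac1N\|u_\beta\|_{2^*}^{2^*}=m_\beta\to0$ and no classification of the bubble or half-space reflection.

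In case (1), however, the energy bound only concerns least-energy solutions, so you must invoke Theorem \ref{Theorem0.1}(3). The paper proves that uniqueness through the a priori bound of Lemma \ref{lemma2.7}. That bound, combined with Theorem \ref{lemma2.5}, already yields $M_\beta\to0$ for every positive solution, so your Moser step is redundant there.
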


The paper is organized as follows. In Section 2 we collect some preliminary results. Then we study the existence, uniqueness and asymptotic behavior of positive solutions in Section 3, Section 4 and Section 5 respectively.

\section{Preliminaries}
In this section, we provide some preliminary results which will be used in the latter.

\begin{lemma}\label{lemma2.3}
(Lemma 5.2, \cite{W}) Suppose that $\partial\Omega$ is sufficiently smooth, $f\in L^p(\Omega)$, 
$\varphi\in W^{1,p}(\Omega)$, $p\in(1,\infty)$. 
If $u$ is a solution of 
\[
\begin{cases}
-\Delta u=f,\quad &\text{in }\Omega,\\
\dfrac{\partial u}{\partial \nu}=\varphi, &\text{on }\partial\Omega,
\end{cases}
\]
then 
\[
\|u\|_{W^{2,p}(\Omega)}
\leq C\big(\|f\|_{L^p(\Omega)}+\|\varphi\|_{W^{1,p}(\Omega)}\big).
\]
\end{lemma}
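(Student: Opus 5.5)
The plan is to derive this estimate from the classical Agmon--Douglis--Nirenberg $L^p$ theory for the Neumann problem, and then remove the lower-order term with a compactness argument.

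One point must be fixed first. The Neumann problem determines $u$ only up to an additive constant, and it is solvable only under the compatibility condition $\int_\Omega f\,dx+\int_{\partial\Omega}\varphi\,dS=0$. So the inequality as stated can only hold for a normalized solution. I will prove it for the solution with $\int_\Omega u\,dx=0$, which is how I read the statement. In the applications (Robin data $\varphi=-\beta u$) this is harmless, since the additive constant is controlled separately. Throughout, $\varphi\in W^{1,p}(\Omega)$ enters only through its trace, and the trace inequality gives $\|\varphi\|_{W^{1-1/p,p}(\partial\Omega)}\le C\|\varphi\|_{W^{1,p}(\Omega)}$.

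\textbf{Step 1 (a priori estimate with lower-order term).} I first show
\[
\|u\|_{W^{2,p}(\Omega)}\le C\bigl(\|f\|_{L^p(\Omega)}+\|\varphi\|_{W^{1,p}(\Omega)}+\|u\|_{L^p(\Omega)}\bigr).
\]
The argument is the standard one.
\begin{itemize}
\item Replace $u$ by $w=u-\Phi$, where $\Phi\in W^{2,p}$ has $\partial_\nu\Phi=\varphi$ on $\partial\Omega$ and $\|\Phi\|_{W^{2,p}}\le C\|\varphi\|_{W^{1,p}}$. Such a $\Phi$ exists by the right inverse of the trace map $(\Phi|_{\partial\Omega},\partial_\nu\Phi|_{\partial\Omega})$, or simply by taking $\Phi=\varphi\, d\,\chi$ near the boundary, where $d$ is the signed distance.
\item This reduces the problem to homogeneous Neumann data with right-hand side $f+\Delta\Phi\in L^p$.
\item Cover $\overline\Omega$ by finitely many charts. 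In interior charts, use the Calder\'on--Zygmund estimate for $\Delta$ applied to $\eta w$, where $\eta$ is a cutoff.
\item In boundary charts, flatten $\partial\Omega$. This produces a uniformly elliptic operator with $C^1$ coefficients and an oblique (conormal) boundary condition. Freeze the coefficients at the center of the chart.
\item For the constant-coefficient half-space Neumann problem, an even reflection across $\{x_N=0\}$ turns it into a whole-space problem, where Calder\'on--Zygmund applies.
\item The errors from freezing coefficients are absorbed when the charts are small. The commutator terms with the cutoffs are of lower order; they are handled by interpolation, $\|\nabla w\|_p\le\varepsilon\|D^2w\|_p+C_\varepsilon\|w\|_p$.
\end{itemize}
This step is textbook (ADN, or Gilbarg--Trudinger \S9 adapted to the Neumann condition), and I would cite it rather than redo it.

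\textbf{Step 2 (removing $\|u\|_{L^p}$).} This uses the normalization $\int_\Omega u=0$ and a contradiction argument. Suppose there are solutions $u_k$ with zero mean, $\|u_k\|_{W^{2,p}}=1$, and $\|f_k\|_p+\|\varphi_k\|_{W^{1,p}}\to0$. By Rellich, a subsequence converges strongly in $W^{1,p}$ and weakly in $W^{2,p}$ to some $u$. The limit satisfies $-\Delta u=0$, $\partial_\nu u=0$ and $\int_\Omega u=0$, so $u\equiv0$ (by testing with $u$ when $p\ge 2$, or by elliptic regularity in general). Then $\|u_k\|_p\to0$, and Step 1 gives $\|u_k\|_{W^{2,p}}\to0$, which contradicts the normalization.

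The main obstacle is not analytic but the formulation: the kernel of constants means the estimate is only meaningful for a normalized solution. Step 2 is exactly where that normalization is used.
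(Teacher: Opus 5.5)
The paper gives no argument for this lemma: it calls it classical and cites Lemma 5.2 of \cite{W}. What you wrote is the standard proof behind that citation: Agmon--Douglis--Nirenberg localization with a lower-order term, followed by a Rellich compactness argument to remove that term. So the real difference is in the formulation, and there you are right. As literally stated the inequality is false: $u\equiv c\neq0$, $f=0$, $\varphi=0$ gives left-hand side $|c|\,|\Omega|^{1/p}$ and right-hand side $0$.

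Note, however, that the version the paper actually uses is your Step 1, not the zero-mean version. In Theorem \ref{lemma2.5} the estimate is applied to $u_\beta$ itself, which has no reason to have zero mean, with $f=u_\beta^p$ and $\varphi=-\beta u_\beta$. There the extra term $\|u_\beta\|_{L^q}$ is harmless because of the uniform bound \eqref{l2.4}, and $\beta\|u_\beta\|_{W^{1,q}}$ is absorbed by interpolation. The paper also invokes the lemma to conclude $u_\beta\in W^{2,q}$, i.e.\ as a regularity statement. Your argument is an a priori estimate for $u$ already in $W^{2,p}$, which suffices there only because $u_\beta$ is smooth up to the boundary by Schauder theory.

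Two details of your sketch need repair.

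\textbf{The lifting $\Phi$.} The ``simple'' choice $\Phi=\varphi\,d\,\chi$ is not bounded from $W^{1,p}$ to $W^{2,p}$. Its Hessian $D^2\Phi$ contains $d\,\chi\,D^2\varphi$, and $\varphi\in W^{1,p}$ may fail to be in $W^{2,p}$ near an interior point of the collar, where $d\neq0$. Keep only your first option, a bounded right inverse of the trace map. Alternatively, multiply by $d\,\chi$ a Poisson-type extension of the boundary trace of $\varphi$ rather than $\varphi$ itself.

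\textbf{The limit $u\equiv0$ in Step 2.} The appeal to ``elliptic regularity in general'' for $p<2$ can be replaced by a one-line duality argument valid for every $p\in(1,\infty)$. For smooth $h$ with mean $\bar h$, let $v$ be a smooth solution of $-\Delta v=h-\bar h$, $\partial_\nu v=0$. Green's second identity, valid for $u\in W^{2,p}$, gives $\int_\Omega u\,(h-\bar h)\,dx=0$. Since $u$ has zero mean, this yields $\int_\Omega uh\,dx=0$ for all smooth $h$, so $u\equiv0$.
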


\begin{proof}
This is a classical result.
\end{proof}

For each $p\geq 0$, suppose that $u_\beta$ is a positive solution to equation \eqref{eq0.1}. Then we have 

\begin{lemma}\label{Lemma2.1}
For each $\beta>0$,  we have that there exists constant $c$, which depends on $\beta$,  such that 
$$
u_\beta\ge c>0.
$$
\end{lemma}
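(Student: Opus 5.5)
The plan is to show that $u_\beta$ is continuous and strictly positive on the compact set $\overline{\Omega}$; then $c:=\min_{\overline\Omega}u_\beta>0$ does the job. Continuity up to the boundary comes from elliptic regularity. Since $u_\beta$ is a classical solution, bootstrapping with Lemma \ref{lemma2.3} (with $f=u_\beta^p$ and Neumann datum $\varphi=-\beta u_\beta$) gives $u_\beta\in W^{2,q}(\Omega)$ for all large $q$, hence $u_\beta\in C^{1,\alpha}(\overline\Omega)$. In the supercritical radial case we assume, as the paper does, that $u_\beta$ is classical. Therefore the minimum of $u_\beta$ over $\overline\Omega$ is attained at some $x_0\in\overline\Omega$, and it suffices to show $u_\beta(x_0)>0$.

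Since $u_\beta>0$ in $\Omega$ by assumption, an interior minimum is automatically positive. So the only case to rule out is $x_0\in\partial\Omega$ with $u_\beta(x_0)=0$.

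In that case, note that $-\Delta u_\beta=u_\beta^p\ge0$ in $\Omega$, so $u_\beta$ is superharmonic, and it is not identically zero. Since $\Omega$ is smooth, it satisfies the interior ball condition at $x_0$. Hopf's boundary lemma then gives $\frac{\partial u_\beta}{\partial\nu}(x_0)<0$. On the other hand, the Robin condition gives
\[
\frac{\partial u_\beta}{\partial\nu}(x_0)=-\beta u_\beta(x_0)=0,
\]
which is a contradiction. Hence $u_\beta(x_0)>0$, and $c=u_\beta(x_0)$ works. For $p=1$ the same argument applies, with $-\Delta u_\beta=\lambda_{1,\beta}u_\beta\ge0$.

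The main obstacle is justifying Hopf's lemma for $p<1$, where $u^p$ is not Lipschitz at $0$. This is not a real difficulty, because only the sign condition $-\Delta u_\beta\ge0$ and $C^1$ regularity up to the boundary are used, not any linearization of the nonlinearity. One should still check that $u_\beta\in C^1(\overline\Omega)$ when $0\le p<1$. This holds because $u_\beta^p$ is bounded, which already yields $W^{2,q}$ regularity.
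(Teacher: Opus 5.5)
Your proposal is correct and follows essentially the same route as the paper: both apply Hopf's lemma at a boundary minimum point and combine it with the Robin condition $\frac{\partial u_\beta}{\partial\nu}=-\beta u_\beta$. The only difference is cosmetic. You argue by contradiction from $u_\beta(x_0)=0$, using $u_\beta>0$ in $\Omega$ to get the strict inequality Hopf needs. The paper instead first places the minimum on $\partial\Omega$ via the strong maximum principle, then reads off $u_\beta(x_0)=-\beta^{-1}\frac{\partial u_\beta}{\partial\nu}(x_0)>0$ directly.
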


\begin{proof}
Since 
$$
-\Delta u_\beta=u_\beta^p> 0,
$$
then by strong maximum principle, we can get that there exists $x_0\in\partial\Omega$ such that 
$$
u_\beta(x_0)=\min_{\bar{\Omega}}u_\beta<u(x),\quad x\in \Omega.
$$
Thus, by Hopf's Lemma, we have 
$$
\frac{\partial u_\beta}{\partial \nu}(x_0)<0.
$$
By the Robin boundary condition, we derive 
$$
u_\beta(x_0)=-\frac{1}{\beta}\frac{\partial u_\beta}{\partial \nu}(x_0)>0.
$$
Therefore, we prove the claim. 
\end{proof}

\begin{lemma}\label{lemma2.2}
If $p=1$, let $\lambda_{1,\beta}$ denote the first eigenvalue of the Robin problem
\[
\begin{cases}
-\Delta \phi=\lambda_{1,\beta}\phi, &\text{in }\Omega,\\
\dfrac{\partial \phi}{\partial \nu}+\beta \phi=0, &\text{on }\partial\Omega.
\end{cases}
\]
Then we have
\begin{equation}\label{2.7}
\lambda_{1,\beta}\le\beta\frac{|\partial\Omega|}{|\Omega|}\to0,\quad\hbox{as }\beta\to0.
\end{equation}
\end{lemma}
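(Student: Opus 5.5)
The bound follows from the variational characterization of $\lambda_{1,\beta}$ by testing the Rayleigh quotient with the constant function. Recall that
\[
\lambda_{1,\beta}=\inf_{\phi\in H^1(\Omega)\setminus\{0\}}
\frac{\int_{\Omega}|\nabla \phi|^2\,dx+\beta\int_{\partial\Omega}\phi^2\,dS}{\int_{\Omega}\phi^2\,dx}.
\]
The infimum is attained by the first Robin eigenfunction. This follows from the compact embeddings $H^1(\Omega)\hookrightarrow L^2(\Omega)$ and $H^1(\Omega)\hookrightarrow L^2(\partial\Omega)$ (the latter via the trace theorem for smooth bounded $\Omega$) together with weak lower semicontinuity of the numerator. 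The associated Euler--Lagrange equation is exactly the weak form of the Robin eigenvalue problem in the statement. Since only an upper bound is needed, it is in fact enough that $\lambda_{1,\beta}$ is bounded above by the Rayleigh quotient of any admissible test function.

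Take $\phi\equiv 1\in H^1(\Omega)$, which is admissible because $\Omega$ is bounded. Then $\nabla\phi=0$, $\int_{\partial\Omega}\phi^2\,dS=|\partial\Omega|$ and $\int_\Omega\phi^2\,dx=|\Omega|$. Substituting into the quotient gives
\[
\lambda_{1,\beta}\le \frac{\beta|\partial\Omega|}{|\Omega|}.
\]
Since $|\partial\Omega|$ and $|\Omega|$ are fixed positive finite quantities independent of $\beta$, the right-hand side tends to $0$ as $\beta\to0$. This is \eqref{2.7}.

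There is no real obstacle here. The only point to state explicitly is that $\lambda_{1,\beta}$ coincides with the infimum of the Rayleigh quotient. If desired, one can also note the lower bound $\lambda_{1,\beta}\ge 0$, which is immediate because the quotient is nonnegative. Together with the upper bound, this shows $\lambda_{1,\beta}\to0$ by squeezing.
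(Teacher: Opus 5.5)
Your proposal is correct and is the same argument as the paper's proof: recall the variational characterization of $\lambda_{1,\beta}$ and test the Rayleigh quotient with $\phi\equiv 1$ to get $\lambda_{1,\beta}\le \beta|\partial\Omega|/|\Omega|$. Your remarks on attainment of the infimum and on $\lambda_{1,\beta}\ge 0$ are not needed for the upper bound, but they are harmless.
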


\begin{proof}
We first  recall the definition of $\lambda_{1,\beta}$, that is
$$
\lambda_{1,\beta}=\inf_{\phi\in H^1(\Omega)\backslash\{0\}}\frac{\int_{\Omega}|\nabla \phi|^2dx+\beta\int_{\partial\Omega}\phi^2dS}{\int_{\Omega}\phi^2dx}.
$$
Then, we take $\phi=1$, we get 
$$
 \lambda_{1,\beta}\leq\beta\frac{|\partial\Omega|}{|\Omega|}
$$
\end{proof}

\section{Existence results}
In this section, we shall prove some existence results for problem \eqref{eq0.1}. Some of them are well-known, as the next one.
\begin{theorem}
Suppose that $1<p<2^*-1$, then equation \eqref{eq0.1} admits a positive  solution for each $\beta>0$.
\end{theorem}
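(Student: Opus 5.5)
We obtain the solution by minimizing the Rayleigh-type quotient $I_\beta$ defined in \eqref{defIbeta} and then rescaling the minimizer. First I show that $I_\beta>0$. For this I use the norm equivalence
\[
\int_\Omega|\nabla u|^2\,dx+\beta\int_{\partial\Omega}u^2\,dS\ \ge\ c_\beta\|u\|^2,\qquad u\in H^1(\Omega),
\]
with $c_\beta>0$. This is proved by contradiction. Suppose there is a sequence with $\|u_n\|=1$ for which the left side tends to $0$. By Rellich compactness and compactness of the trace $H^1(\Omega)\hookrightarrow L^2(\partial\Omega)$, a subsequence converges. Since $\nabla u_n\to0$ in $L^2$, it converges strongly in $H^1$ to a constant $c$. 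The boundary term tends to $0$, so $c=0$. This contradicts $\|u_n\|=1$. Combining the equivalence with the Sobolev embedding $H^1(\Omega)\hookrightarrow L^{p+1}(\Omega)$, valid since $p+1<2^*$, gives $I_\beta>0$.

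Next I take a minimizing sequence $(u_n)$ normalized by $\int_\Omega|u_n|^{p+1}\,dx=1$. I may replace $u_n$ by $|u_n|$, since this leaves the quotient unchanged. By the norm equivalence, $(u_n)$ is bounded in $H^1(\Omega)$, so a subsequence satisfies $u_n\rightharpoonup u$ weakly in $H^1(\Omega)$. Since $p+1<2^*$, the embedding into $L^{p+1}(\Omega)$ is compact, so $u_n\to u$ strongly in $L^{p+1}(\Omega)$. The trace is compact into $L^2(\partial\Omega)$, so $u_n\to u$ strongly in $L^2(\partial\Omega)$. Hence $\int_\Omega u^{p+1}\,dx=1$ and $u\ge0$. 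Weak lower semicontinuity of the Dirichlet integral then shows that $u$ attains $I_\beta$.

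The Euler--Lagrange equation of the quotient gives
\[
\int_\Omega\nabla u\nabla v\,dx+\beta\int_{\partial\Omega}uv\,dS=I_\beta\int_\Omega u^{p}v\,dx\qquad\forall v\in H^1(\Omega).
\]
Set $w=I_\beta^{1/(p-1)}u$, which is allowed because $p>1$. Then $w$ is a nonnegative, nontrivial weak solution of \eqref{eq0.1}.

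The remaining step is regularity and strict positivity, and I expect it to be the main technical point. Because the exponent is subcritical, a Brezis--Kato/Moser bootstrap gives $w\in L^q(\Omega)$ for every $q<\infty$. I then apply Lemma \ref{lemma2.3} with $f=w^p$ and $\varphi=-\beta w$. Iterating through $W^{2,q}$ and then Schauder estimates shows that $w$ is a classical solution. Since $w\ge0$, $w\not\equiv0$ and $-\Delta w\ge0$, the strong maximum principle gives $w>0$ in $\Omega$. The argument of Lemma \ref{Lemma2.1}, namely Hopf's lemma combined with the Robin condition, then gives $w>0$ on $\overline\Omega$. Hence $w$ is a positive solution of \eqref{eq0.1} for every $\beta>0$.
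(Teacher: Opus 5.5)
Your proposal is correct and follows the same route as the paper: you minimize the quotient $I_\beta$ under the constraint $\int_\Omega|u|^{p+1}\,dx=1$, using the equivalence of $\int_\Omega|\nabla u|^2\,dx+\beta\int_{\partial\Omega}u^2\,dS$ with the $H^1$ norm and the compact embeddings, then replace the minimizer by its absolute value, rescale, and obtain positivity from the strong maximum principle. Beyond the paper's sketch, you also prove the norm equivalence, identify the scaling constant $I_\beta^{1/(p-1)}$, and carry out the regularity bootstrap, all of which the paper only asserts.
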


\begin{proof}
The proof is standard. For the convenience of readers, we provide a brief proof here. It is sufficient to prove that 
$$
I_\beta=\inf_{u\in \mathcal{M}}\Big(\int_{\Omega} |\nabla u|^2dx+\beta\int_{\partial\Omega}u^2dS\Big)
$$
can be achieved, where $\mathcal{M}$ is defined by 
$$
\mathcal{M}=\Big\{ u\in H^1(\Omega) : \int_{\Omega}|u|^{p+1}dx=1\Big\}.
$$
Obviously, one can get $I_\beta>0$. Let $\{u_n\}$ be a minimizing sequence of $I_\beta$. Then we have 
$$
\int_{\Omega} |\nabla u_n|^2dx+\beta\int_{\partial\Omega}u_n^2dS=I_\beta+o_n(1),
$$
and
$$
\int_{\Omega}|u_n|^{p+1}dx=1.
$$
Note that the norm 
$$
\|u\|_\beta:=\Big(\int_{\Omega} |\nabla u|^2dx+\beta\int_{\partial\Omega}u^2dS\Big)^{\frac{1}{2}}
$$
is equivalent  to the $H^1(\Omega)$-norm, then we can get that  the sequence $\{u_n\}$ is bounded in $H^1(\Omega)$. Moreover, by the compactness of the Sobolev embeddings we deduce that there exists $u_0\in H^1(\Omega)$ such that  
$$
\int_{\Omega} |\nabla u_0|^2dx+\beta\int_{\partial\Omega}u_0^2dS=I_\beta,\quad \int_{\Omega}|u_0|^{p+1}dx=1.
$$
Thus we prove that $I_\beta$ can be achieved. 

Finally, we may assume that $u_0$ is nonnegative, otherwise we may choose $|u_0|$ replace by $u_0$.  After a suitable scaling, we can obtain a nonnegative solution of \eqref{eq0.1}. By strong maximum principle, we can derive a positive solution  of \eqref{eq0.1}.
\end{proof}

Define the Nehari manifold 
$$
\mathcal{N}=\Big\{ u\in H^1(\Omega)\backslash\{0\} : \int_{\Omega}|\nabla u|^2dx+\beta\int_{\partial\Omega}u^2dS=\int_{\Omega}|u_+|^{p+1}dx\Big\},
$$
and 
$$
m_\beta=\inf_{u\in\mathcal{N}} J_\beta(u).
$$
Then we have the following conclusion.

\begin{theorem}
Suppose that $p=2^*-1$. If 
$$
0<\beta<2^{-\frac{2}{N}}|\partial\Omega|^{-1}|\Omega|^{\frac{2}{p+1}}\mathcal{S},
$$ 
then equation \eqref{eq0.1} has a ground state solution.
\end{theorem}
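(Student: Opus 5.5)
We will minimize $J_\beta$ on the Nehari manifold $\mathcal{N}$ and use Wang's compactness result: $J_\beta$ satisfies $(PS)_c$ for $0<c<\frac{1}{2N}\mathcal{S}^{N/2}$. The argument has three steps.

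\textbf{Step 1: $m_\beta$ is positive and is a mountain-pass level.} For $u\in\mathcal{N}$ we have
\[
J_\beta(u)=\Big(\frac12-\frac1{p+1}\Big)\int_\Omega|u_+|^{p+1}dx=\frac1N\|u\|_\beta^2 .
\]
Since $\|\cdot\|_\beta$ is equivalent to the $H^1$ norm, the Sobolev embedding gives $\|u\|_\beta^2=\int|u_+|^{2^*}\le C\|u\|_\beta^{2^*}$. Hence $\|u\|_\beta$ is bounded below on $\mathcal{N}$, and so $m_\beta>0$.

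We will also use the standard characterization
\[
m_\beta=\inf_{u_+\ne0}\max_{t>0}J_\beta(tu)=\frac1N\inf_{u_+\neq 0}\left(\frac{\|u\|_\beta^2}{(\int|u_+|^{p+1})^{2/(p+1)}}\right)^{N/2}.
\]
This comes from a one-variable computation: $t\mapsto J_\beta(tu)$ has a unique maximum, attained at the $t$ for which $tu\in\mathcal{N}$.

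\textbf{Step 2: the level lies below the compactness threshold.} Take the test function $u\equiv1$. Then
\[
m_\beta\le \frac1N\Big(\beta|\partial\Omega|\,|\Omega|^{-2/(p+1)}\Big)^{N/2}.
\]
The assumption on $\beta$ gives $\beta|\partial\Omega||\Omega|^{-2/(p+1)}<2^{-2/N}\mathcal{S}$. Raising to the power $N/2$ yields
\[
m_\beta<\frac1N\cdot\frac12\,\mathcal{S}^{N/2}=\frac1{2N}\mathcal{S}^{N/2},
\]
which is exactly the $(PS)_c$ range. The factor $2^{-2/N}$ in the hypothesis is designed precisely so that this works: it reflects the half-bubble energy at a boundary point.

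\textbf{Step 3: a minimizer exists and is a positive solution.} By Ekeland's variational principle applied on $\mathcal{N}$, there is a minimizing sequence for $m_\beta$ which is a $(PS)_{m_\beta}$ sequence for $J_\beta$. Equivalently, one can use the mountain pass theorem, since its geometry holds at $0$ and along $t\cdot1$. Wang's lemma then gives a strongly convergent subsequence with limit $u_0$. Then $J_\beta(u_0)=m_\beta>0$ and $J_\beta'(u_0)=0$, so $u_0\neq0$ and $u_0\in\mathcal{N}$, i.e. $u_0$ is a ground state.

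Testing $J_\beta'(u_0)=0$ with $u_0^-$ gives $\|u_0^-\|_\beta^2=0$, so $u_0\ge0$. Elliptic regularity for the critical Robin problem (Brezis–Kato, then Lemma \ref{lemma2.3}) makes $u_0$ classical, and the strong maximum principle together with Lemma \ref{Lemma2.1} gives $u_0>0$.

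The main obstacle is checking that the $(PS)$ sequence obtained on the Nehari manifold is indeed a $(PS)$ sequence for the unconstrained functional $J_\beta$. For this we will show that the Lagrange multiplier vanishes, using the nondegeneracy
\[
\langle G'(u),u\rangle=(1-p)\|u\|_\beta^2<0 \quad\text{on }\mathcal{N},
\]
where $G(u)=\|u\|_\beta^2-\int|u_+|^{p+1}$. We rely on Wang's compactness lemma rather than reproving it.
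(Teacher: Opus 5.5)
Your proposal is correct and follows the paper's proof. The paper likewise bounds $m_\beta$ with a constant test function: your $u\equiv1$, inserted into the fibering characterization, is the same as the paper's Nehari-normalized constant $(\beta|\partial\Omega|/|\Omega|)^{1/(p-1)}$. It then checks that this level lies below $\frac{1}{2N}\mathcal{S}^{N/2}$ and concludes with Ekeland's principle on $\mathcal{N}$ together with Wang's $(PS)_c$ result. You also supply details the paper omits: $m_\beta>0$, the vanishing of the Lagrange multiplier, and the nonnegativity and positivity of the minimizer.
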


\begin{proof}
In order to prove the existence of ground state solutions, it is sufficient to prove that $m_\beta$ 
can be achieved. Taking 
$$
u=\Big(\frac{\beta|\partial\Omega|}{|\Omega|}\Big)^{\frac{1}{p-1}}\in \mathcal{N},
$$
we get 
$$
m_\beta\le J_\beta\Big(\frac{\beta|\partial\Omega|}{|\Omega|}\Big)^{\frac{1}{p-1}}=\frac{1}{N}\beta^{\frac{p+1}{p-1}}|\partial\Omega|^{\frac{p+1}{p-1}}|\Omega|^{-\frac{2}{p-1}}<\frac{1}{2N}\mathcal{S}^{\frac{N}{2}}.
$$
Let $\{u_n\}\subset\mathcal{N}$ be a minimizing sequence. Then by Ekeland's variational principle, we have that there exists a sequence $\{v_n\}$ such that 
$$
J_\beta(v_n)\to c,\quad J_\beta|_{\mathcal{N}}^\prime(v_n)\to0,\quad \|v_n-u_n\|\to0,\quad \text{as}\ n\to\infty.
$$
Since 
$$
m_\beta<\frac{1}{2N}\mathcal{S}^{\frac{N}{2}}, 
$$
then $J_\beta$ satisfies $(PS)$ condition. Thus, we can deduce that there exists $u_\beta\in\mathcal{N}$ such that $J_\beta(u_\beta)=m_\beta$. Therefore, we prove the existence of ground state solutions.
\end{proof}

We first observe that for $p>\frac{N+2}{N-2}$ there are no known existence results in the literature. For this reason, we begin by providing an existence result in the case of the ball. The existence of solutions in the supercritical case for small $\beta$ in general domains remains an interesting open problem.

As recalled earlier, in the supercritical case it is more difficult to obtain existence results for solutions. However, in the case of the ball, the existence of {\em slow decay solutions} for the problem in $\R^n$ allows one to deduce the existence of a solution in the case of the ball.

\begin{theorem}
Suppose that $p>\frac{N+2}{N-2}$, $0<\beta<\frac{2}{p-1}$ and $\Omega$ is a ball.  Then there exists a radial solution of equation \eqref{eq0.1}.
\end{theorem}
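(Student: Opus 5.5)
We reduce to a radial ODE shooting problem and use the scaling invariance of $-\Delta u=u^p$. By translation and the scaling $u(x)\mapsto R^{2/(p-1)}u(Rx)$, it suffices to treat the ball $B_R(0)$. This scaling changes the Robin parameter to $\beta R$ and leaves the quantity $\beta R$ invariant only in a suitable sense, so we keep track of it carefully. The plan is to consider, for $\alpha>0$, the solution $w_\alpha$ of the initial value problem
\[
w''+\frac{N-1}{r}w'+w^p=0,\qquad w(0)=\alpha,\quad w'(0)=0.
\]
By scaling, $w_\alpha(r)=\alpha\,w_1(\alpha^{(p-1)/2}r)$. For $p\ge \frac{N+2}{N-2}$ it is classical (Joseph--Lundgren, Serrin--Zou \cite{SZ}) that $w_1$ is positive for all $r>0$ and strictly decreasing. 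Moreover, for $p>\frac{N+2}{N-2}$ it is a slow decay solution:
\[
w_1(r)\sim L\,r^{-\frac{2}{p-1}},\qquad L=\Big[\tfrac{2}{p-1}\Big(N-2-\tfrac{2}{p-1}\Big)\Big]^{\frac{1}{p-1}},\qquad r\to\infty,
\]
together with $r w_1'(r)/w_1(r)\to -\frac{2}{p-1}$.

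The key step is to study the function
\[
\Phi(r)=-\frac{r\,w_1'(r)}{w_1(r)},\qquad r>0.
\]
A radial function $u(x)=w_1(\mu |x|)$, possibly multiplied by the scaling factor, solves the Robin problem on $B_R$ with parameter $\beta$ iff $-w_1'(\mu R)\mu=\beta w_1(\mu R)$, i.e. $\Phi(\mu R)=\beta R$. Here the rescaled function is $\mu^{2/(p-1)}w_1(\mu x)$, which still solves $-\Delta u=u^p$. So for the ball of radius $R$ we need $\Phi(s)=\beta R$ for some $s>0$, and we then take $\mu=s/R$. Since $\Phi(0)=0$ (because $w_1'(r)\approx -r/N$ near $0$), $\Phi$ is continuous and positive on $(0,\infty)$, and $\Phi(s)\to\frac{2}{p-1}$ as $s\to\infty$, the intermediate value theorem gives a solution whenever $0<\beta R<\frac{2}{p-1}$. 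Normalizing $R=1$ (the statement's condition $\beta<\frac{2}{p-1}$ corresponds to the unit ball, or to $\beta R$ in general) yields the radial solution. Positivity follows from positivity of $w_1$, and the Neumann-type derivative sign is automatic since $w_1$ is decreasing.

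The main obstacle is justifying the asymptotic $\Phi(s)\to\frac{2}{p-1}$, i.e. the derivative asymptotics of the slow decay solution, and not just $w_1\sim Lr^{-2/(p-1)}$. I would handle this via the Emden--Fowler change of variables $t=\log r$, $w_1(r)=r^{-2/(p-1)}v(t)$. This turns the ODE into an autonomous equation
\[
v''+a v'+b\,(v^p-L^{p-1}v)=0\quad\text{with } a=N-2-\tfrac{4}{p-1}>0 \text{ for } p>\tfrac{N+2}{N-2}.
\]
The energy $E=\frac12 v'^2+\frac{b}{p+1}v^{p+1}-\frac{b}{2}L^{p-1}v^2$ is decreasing, since $E'=-a v'^2$. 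The trajectory starts at the origin as $t\to-\infty$ and stays positive (this is the known positivity for supercritical $p$). A LaSalle argument shows $(v,v')\to(L,0)$, because $L$ is the only positive equilibrium. Then $\Phi=\frac{2}{p-1}-v'/v\to\frac{2}{p-1}$. The equality case $p=\frac{N+2}{N-2}$ is not needed since the statement assumes strict supercriticality. Finally, elliptic regularity is immediate since the solution is smooth.
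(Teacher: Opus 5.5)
Your proposal is correct and follows essentially the paper's argument: the paper also rescales the entire slow-decay radial solution $U$ from Serrin--Zou and applies the intermediate value theorem to $\delta^{2/(p-1)}\bigl(\delta U'(\delta)+\beta U(\delta)\bigr)=\delta^{2/(p-1)}U(\delta)\bigl(\beta-\Phi(\delta)\bigr)$ (with $\Phi$ computed for $U$), and it uses the same Emden--Fowler variables to get $r^{a+1}U'(r)\to -a\,b^{1/(p-1)}$. The differences are minor. You prove $(v,v')\to(L,0)$ yourself via the Lyapunov function instead of citing Rupflin--Struwe; in doing so, the factor $b$ in your transformed equation should be $1$, and excluding the origin as the $\omega$-limit requires the strict inequality $E<0$. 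You also correctly observe that the hypothesis is really $\beta R<\frac{2}{p-1}$ for a ball of radius $R$, which the paper's ``without loss of generality $\Omega=B_1(0)$'' glosses over.
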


\begin{proof}
We recall a key result obtained in \cite{SZ} (see also \cite{RS}) where it was proved that the following problem
\begin{equation}
\begin{cases}
-\Delta u=u^p,\quad &\text{in}\ \R^N,\\
u>0, &\text{in}\ \R^N,
\end{cases}
\end{equation}
has the unique radial solution $U$ and 
\begin{equation}\label{li-2.10}
\lim_{|x|\to\infty}|x|^{a}U(|x|)=b^{\frac{1}{p-1}},
\end{equation}
where $a=\frac{2}{p-1}$ and $b=a(N-2-a)$. The similar result can be also founded in \cite{SZ}. 

From \cite{RS}, one can prove that 
\begin{equation}\label{li-2.11}
\lim_{|x|\to\infty}|x|^{a+1}U^\prime(|x|)=-ab^{\frac{1}{p-1}}.
\end{equation}
Indeed, let $s=\log r$ and $v(s)=r^aU(r)$, then we have $U(r)=e^{-s}v(s)$. Compute 
\begin{equation}\label{li-2.12}
r^{a+1}U^\prime(r)=v^\prime(s)-av(s).
\end{equation}
Rupflin and Struwe \cite{RS} showed that 
$$
\lim_{s\to\infty}v^\prime(s)=0,\quad \lim_{s\to\infty}v(s)=b^{\frac{1}{p-1}}.
$$
Thus, we can get 
$$
\lim_{s\to\infty}\Big( v^\prime(s)-av(s)\Big)=-ab^{\frac{1}{p-1}}.
$$
So by \eqref{li-2.12}, we can get $r^{a+1}U^\prime(r)\to -ab^{\frac{1}{p-1}}$ as $r\to\infty$.

Without loss of generality, we may suppose that $\Omega=B_{1}(0)$. Define 
$$
U_\delta(r)=\delta^{\frac{2}{p-1}}U(\delta r),
$$
then it is immediate that $U_\delta$ verifies equation in \eqref{eq0.1} for all $\delta>0$. Now, we need to find a suitable $\delta$ such that the boundary condition in \eqref{eq0.1}  holds.  Since $U^\prime(0)=0$, we have 
\begin{equation*}
U_\delta^\prime(1)+\beta U_\delta(1)=\delta^{\frac{2}{p-1}}(\delta U^\prime(\delta)+\beta U(\delta))=\delta^{\frac{2}{p-1}}(\beta U(0)+o(1)),\quad \text{as}\ \delta\to 0.
\end{equation*}
By \eqref{li-2.10} and \eqref{li-2.11}, we have 
\begin{equation*}
U_\delta^\prime(1)+\beta U_\delta(1)=\delta^{\frac{2}{p-1}}(\delta U^\prime(\delta)+\beta U(\delta))=b\Big(\beta-\frac{2}{p-1}+o(1) \Big),\quad \text{as}\ \delta\to \infty.
\end{equation*}
Since $0<\beta<\frac{2}{p-1}$, then we have 
$$
\lim_{\delta\to 0}\Big(U_\delta^\prime(1)+\beta U_\delta(1)\Big)>0,\quad \lim_{\delta\to \infty}\Big(U_\delta^\prime(1)+\beta U_\delta(1)\Big)<0.
$$
Thus, there exists $\hat{\delta}=\hat{\delta}(\beta)>0$ such that 
$$
U_{\hat{\delta}}^\prime(1)+\beta U_{\hat{\delta}}(1)=0.
$$
So $U_{\hat{\delta}}$ solves equation \eqref{eq0.1}.
\end{proof}

\section{Uniqueness results}

In this section, our purpose is to prove some uniqueness results. We begin with the sublinear case.

\subsection{The case $0\le p<1$}  

\begin{theorem}\label{Theorem2.4}
If $0\le p<1$, then the problem \eqref{eq0.1} has exactly one positive solution for each $\beta>0$.
\end{theorem}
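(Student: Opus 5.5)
For $p=0$ the statement is immediate: two solutions of \eqref{1.3} differ by a function $w$ that is harmonic and satisfies $\partial_\nu w+\beta w=0$. Testing with $w$ gives $\int_\Omega|\nabla w|^2+\beta\int_{\partial\Omega}w^2=0$, so $w\equiv0$. Existence for all $0\le p<1$ follows by minimizing $J_\beta$: the term $\frac1{p+1}\int u_+^{p+1}$ is sublinear and the quadratic part is coercive on $H^1(\Omega)$, being equivalent to the $H^1$ norm for $\beta>0$. The minimum is negative (test with $tu$, $t$ small), so the minimizer is nontrivial and, replacing it by its absolute value, nonnegative; the strong maximum principle and Lemma \ref{Lemma2.1} then make it positive. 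The substance is uniqueness for $0<p<1$, which I would prove by a Brezis--Oswald type argument adapted to the Robin condition.

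Let $u,v$ be two positive solutions. By Lemma \ref{Lemma2.1} and elliptic regularity both are smooth up to the boundary and bounded below by a positive constant on $\bar\Omega$. Hence $\frac{u^2-v^2}{u}$ and $\frac{v^2-u^2}{v}$ are admissible test functions in $H^1(\Omega)$. Test the equation of $u$ with $(u^2-v^2)/u$, test the equation of $v$ with $(v^2-u^2)/v$, and add. The boundary terms are
$$
\beta\int_{\partial\Omega}\Big(u\frac{u^2-v^2}{u}+v\frac{v^2-u^2}{v}\Big)dS=0,
$$
so the Robin term cancels exactly; this is the point where the Robin structure is harmless. The gradient terms combine, via the standard Picone identity computation, into
$$
\int_\Omega\Big|\nabla u-\frac uv\nabla v\Big|^2+\Big|\nabla v-\frac vu\nabla u\Big|^2\,dx\ge0.
$$
The right-hand side becomes
$$
\int_\Omega\big(u^{p-1}-v^{p-1}\big)(u^2-v^2)\,dx .
$$
Since $t\mapsto t^{p-1}$ is strictly decreasing for $p<1$, this integrand is $\le0$, with equality only where $u=v$. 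Hence $\int_\Omega(u^{p-1}-v^{p-1})(u^2-v^2)\,dx=0$, so $u\equiv v$.

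The step needing the most care is justifying the test functions and the Picone expansion, that is, checking that $\nabla\big((u^2-v^2)/u\big)$ lies in $L^2$ up to the boundary. This is exactly why the positive lower bound of Lemma \ref{Lemma2.1} on $\bar\Omega$ matters; in the Dirichlet case this step is the delicate one. Here it is routine because $u,v\ge c>0$ and $u,v\in C^1(\bar\Omega)$.

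As a fallback I would use a sub/supersolution scaling argument. Take $t^*=\sup\{t: tv\le u\}$, which is positive by Lemma \ref{Lemma2.1}, and suppose $t^*<1$. Then $t^*v$ is a strict subsolution, because $(t^*v)^p>t^*v^p$. The strong maximum principle and Hopf's lemma applied to $u-t^*v$ (the Robin condition is needed at boundary contact points) then contradict maximality, so $t^*\ge1$ and $u\ge v$; by symmetry $u=v$.
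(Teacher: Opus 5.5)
Your proposal is correct, and its core is the same Brezis--Oswald argument the paper uses: the test functions $u-v^2/u$ and $v-u^2/v$ (admissible thanks to Lemma~\ref{Lemma2.1}), exact cancellation of the Robin boundary terms, the Picone sum of squares, and strict monotonicity of $t\mapsto t^{p-1}$. The paper's proof addresses only uniqueness, so your minimization argument for existence and your alternative comparison argument with $t^*=\min_{\bar\Omega}u/v$ (using Hopf's lemma and the Robin condition at boundary contact points) are valid additions rather than a different route.
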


\begin{proof}
The proof is inspired by the work of Br\'ezis-Oswald \cite{BO}. Suppose that $u$ and $v$ are two positive solutions of problem \eqref{eq0.1}.
A direct computation yields that 
$$
\nabla \Big(\frac{u^2}{v}\Big)=\frac{2uv\nabla u-u^2\nabla v}{v^2},\quad \nabla \Big(\frac{v^2}{u}\Big)=\frac{2vu\nabla v-v^2\nabla u}{u^2}.
$$
It follows from Lemma \ref{Lemma2.1} that 
$\frac{u^2}{v}$ and $\frac{v^2}{u}$ belong to $H^1(\Omega)$. Multiplying equation \eqref{eq0.1} by $u-\frac{v^2}{u}$, we can get 
\begin{align}\label{2.4}
&\int_{\Omega}u^{p-1}(u^2-v^2)dx\nonumber\\
&=\int_{\Omega}u^{p}\Big(u-\frac{v^2}{u}\Big)dx \nonumber\\
&=\int_{\Omega}\nabla u\nabla \Big(u-\frac{v^2}{u} \Big)dx-\int_{\partial\Omega}\frac{\partial u}{\partial \nu}\Big(u-\frac{v^2}{u} \Big)dS\nonumber\\
&=\int_{\Omega}\Big(|\nabla u|^2-\frac{2v}{u}\nabla u\nabla v+\frac{v^2}{u^2}|\nabla u|^2 \Big)dx+\beta\int_{\partial\Omega}(u^2-v^2)dS.
\end{align}
Similarly, we can derive 
\begin{align}\label{2.5}
&\int_{\Omega}v^{p-1}(v^2-u^2)dx\nonumber\\
&=\int_{\Omega}\Big(|\nabla v|^2-\frac{2u}{v}\nabla v\nabla u+\frac{u^2}{v^2}|\nabla v|^2 \Big)dx+\beta\int_{\partial\Omega}(v^2-u^2)dS.
\end{align}
Thus, by \eqref{2.4} and \eqref{2.5} we derive 
\begin{equation*}
\int_{\Omega}(u^{p-1}-v^{p-1})(u^2-v^2)dx=\int_{\Omega}\Big(\Big|\nabla u-\frac{u}{v}\nabla v|^2+\Big|\nabla v-\frac{v}{u}\nabla u\Big|^2\Big)dx\ge 0.
\end{equation*}
Since the function $s^{p-1}$ is strictly decreasing in $(0,\infty)$, then we must have $u=v$.
\end{proof}

\subsection {The case $p=1$} 

\begin{lemma}\label{lem4.2}
Suppose that $p=1$, then $\lambda_{1,\beta}$ is simple.  Moreover,   for each $\beta>0$ the eigenvalue problem \eqref{eig} has exactly one positive solution with $||u_\beta||_{L^\infty(\Omega)}=1$.
\end{lemma}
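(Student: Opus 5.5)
The plan has three parts: existence of a minimizer for the Rayleigh quotient, a sign argument, and simplicity via the same Brezis--Oswald type identity used in Theorem \ref{Theorem2.4}.

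\emph{Existence and sign of a minimizer.} Consider the Rayleigh quotient defining $\lambda_{1,\beta}$ in the proof of Lemma \ref{lemma2.2}. Since the norm $\|\cdot\|_\beta$ is equivalent to the $H^1(\Omega)$-norm and $H^1(\Omega)\hookrightarrow L^2(\Omega)$ is compact, a normalized minimizing sequence $\int_\Omega \phi_n^2=1$ is bounded in $H^1(\Omega)$. Weak lower semicontinuity of $\int|\nabla\phi|^2$ and compactness of the trace $H^1(\Omega)\hookrightarrow L^2(\partial\Omega)$ then give a minimizer $\phi$, which is a weak solution of \eqref{eig}. Since $|\phi|$ has the same Rayleigh quotient, we may assume $\phi\ge0$. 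Elliptic regularity (Lemma \ref{lemma2.3}, bootstrapped) makes $\phi$ classical. The strong maximum principle applied to $-\Delta\phi=\lambda_{1,\beta}\phi\ge0$ gives $\phi>0$ in $\Omega$, and the argument of Lemma \ref{Lemma2.1} (Hopf's lemma together with the Robin condition) gives $\phi\ge c>0$ on $\bar\Omega$.

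\emph{Simplicity and uniqueness.} Let $u,v$ be two positive solutions of \eqref{eig}, both with eigenvalue $\lambda_{1,\beta}$. By the positivity just obtained, $u^2/v$ and $v^2/u$ lie in $H^1(\Omega)$. Repeat the computation \eqref{2.4}--\eqref{2.5} with $p=1$: the left-hand sides become $\lambda_{1,\beta}\int(u^2-v^2)$ and $\lambda_{1,\beta}\int(v^2-u^2)$, so they cancel. Adding, we obtain
\[
0=\int_\Omega\Big(\Big|\nabla u-\frac{u}{v}\nabla v\Big|^2+\Big|\nabla v-\frac{v}{u}\nabla u\Big|^2\Big)dx.
\]
Hence $v\nabla u=u\nabla v$, that is, $\nabla(u/v)=0$, so $u=cv$ because $\Omega$ is connected.

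This shows that positive eigenfunctions for $\lambda_{1,\beta}$ are unique up to scaling. To conclude that $\lambda_{1,\beta}$ is simple, take any eigenfunction $w$ for $\lambda_{1,\beta}$. Then $w$ minimizes the Rayleigh quotient, and so does $|w|$, hence $w_\pm$ are also minimizers (or zero). If $w_+\not\equiv 0$, the strong maximum principle forces $w_+>0$ everywhere, so $w$ does not change sign. Therefore $w$ is a multiple of $\phi$ by the step above. Finally, normalizing by $\|u\|_{L^\infty(\Omega)}=1$ fixes the constant, which gives exactly one positive solution.

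\emph{Main obstacle.} The step needing the most care is justifying that $u^2/v\in H^1(\Omega)$ and that the boundary integrals cancel, so that the integration by parts is legitimate. This is handled by the uniform lower bound from Lemma \ref{Lemma2.1} (valid here since $\lambda_{1,\beta}\phi>0$) and by the $C^1(\bar\Omega)$ regularity. With these in place, the boundary terms $\beta\int_{\partial\Omega}(u^2-v^2)$ and $\beta\int_{\partial\Omega}(v^2-u^2)$ cancel exactly, as in Theorem \ref{Theorem2.4}.
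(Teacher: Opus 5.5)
Your proof is correct. It also takes a different route from the paper, because the paper gives no argument here: its proof consists only of the remark that the Robin eigenvalue problem ``is a known result'', implicitly deferring to the standard theory (variational characterization plus strong maximum principle, or Krein--Rutman).

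You instead give a self-contained proof built from ingredients already in the paper:
\begin{itemize}
\item the Rayleigh quotient of Lemma \ref{lemma2.2}, for existence of a minimizer;
\item the $|w|$ and $w_\pm$ trick, for the sign of eigenfunctions;
\item the Hopf--Robin argument of Lemma \ref{Lemma2.1}, giving $u,v\ge c>0$ on $\bar\Omega$ so that $u^2/v,\,v^2/u\in H^1(\Omega)$;
\item the Br\'ezis--Oswald identity of Theorem \ref{Theorem2.4}, specialized to $p=1$.
\end{itemize}

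Your route explains why the $L^\infty$ normalization is needed. For $p<1$, strict monotonicity of $s^{p-1}$ forces $u=v$. For $p=1$, the terms $\lambda_{1,\beta}\int_\Omega(u^2-v^2)$ and $\lambda_{1,\beta}\int_\Omega(v^2-u^2)$ cancel identically. The identity then only yields $\nabla(u/v)=0$, i.e.\ proportionality.

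You also correctly recognize that the Picone-type identity by itself only compares positive eigenfunctions. You close the gap to full simplicity with a separate step: any eigenfunction for $\lambda_{1,\beta}$ is a minimizer, so $w_\pm$ are eigenfunctions (or zero), and the strong maximum principle rules out sign changes.

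The paper's citation is shorter. Your version makes the lemma independent of external references at little extra cost.

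One minor remark: you do not need the equivalence of $\|\cdot\|_\beta$ with the $H^1$ norm to bound the minimizing sequence. The normalization $\int_\Omega\phi_n^2=1$ already controls the $L^2$ part.
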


\begin{proof}
The Robin eigenvalue problem is a known result.
\end{proof}


For superlinear case, in order to prove the uniqueness results, we first establish  the following conclusion.

\begin{theorem}\label{lemma2.5}
Suppose that $p>1$. If there exists a constant $C>0$, which is independent of $\beta$, such that 
\begin{equation}\label{l2.4}
u_\beta\le C,
\end{equation}
then we have 
$$
u_\beta\to 0\quad  \text{in} \ C^2_{loc}(\bar{\Omega})\  \text{as}\ \beta\to0.
$$
Moreover, we have that $u_\beta$ is the unique bounded  solution of equation \eqref{eq0.1} for $\beta$ small enough.
\end{theorem}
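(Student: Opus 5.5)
Under the uniform bound \eqref{l2.4}, I will obtain compactness from elliptic regularity, identify the limit as a constant, and show this constant is zero by an integral identity. Uniqueness will then follow from a rescaling argument combined with the implicit function theorem.

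\textbf{Compactness.} Set $f_\beta=u_\beta^p$ and $\varphi_\beta=-\beta u_\beta$. Since $0<u_\beta\le C$, we have $\|f_\beta\|_{L^q}\le C$ for every $q$. Lemma \ref{lemma2.3} then gives a $W^{2,q}$ bound, once $\|\varphi_\beta\|_{W^{1,q}}$ is controlled. That control comes from a bootstrap: the $W^{1,q}$ norm of $\beta u_\beta$ is at most $\beta\|u_\beta\|_{W^{1,q}}$, and this term can be absorbed on the left for $\beta$ small. We take $q>N$ and then apply Schauder estimates. The result is that $u_\beta$ is bounded in $C^{2,\alpha}(\bar\Omega)$, so along subsequences $u_\beta\to u_0$ in $C^2(\bar\Omega)$.

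\textbf{Identifying the limit.} The limit $u_0\ge0$ solves $-\Delta u_0=u_0^p$ with $\partial_\nu u_0=0$. Integrating over $\Omega$ gives $\int_\Omega u_0^p=0$, hence $u_0\equiv0$. Since every subsequence has the same limit, the whole family converges, which proves the first claim.

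\textbf{Uniqueness.} Suppose $u_\beta,v_\beta$ are two bounded solutions; both tend to $0$. Rescale by $w_\beta=\beta^{-1/(p-1)}u_\beta$, which solves
\[
-\Delta w=\beta w^p,\qquad \partial_\nu w+\beta w=0 .
\]
The key step is to show that $w_\beta$ stays bounded and bounded away from zero.

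Integrating the equation for $u_\beta$ gives
\[
\int_\Omega u_\beta^p=\beta\int_{\partial\Omega}u_\beta .
\]
Write $u_\beta=m_\beta+\psi_\beta$, where $m_\beta$ is the mean of $u_\beta$. By elliptic estimates, $\|\psi_\beta\|_{C^1}\le C\bigl(\|u_\beta^p\|+\beta\|u_\beta\|\bigr)\le C(m_\beta^p+\beta m_\beta)$, so $\psi_\beta=o(m_\beta)$. Substituting into the integral identity gives
\[
m_\beta^{p-1}|\Omega|\sim\beta|\partial\Omega| .
\]
This yields $w_\beta\to\bigl(|\partial\Omega|/|\Omega|\bigr)^{1/(p-1)}=:c_0$ uniformly, and it incidentally proves Theorem \ref{Theorem1.4} under \eqref{l2.4}.

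\textbf{Implicit function argument.} Write $w=c+\beta\phi$ with $\int_\Omega\phi=0$. Dividing the equation by $\beta$ gives the system
\[
-\Delta\phi=(c+\beta\phi)^p,\qquad \partial_\nu\phi=-(c+\beta\phi)
\]
on the boundary, together with the compatibility condition
\[
\int_\Omega(c+\beta\phi)^p=\int_{\partial\Omega}(c+\beta\phi).
\]
At $\beta=0$ this system has a unique solution $(c_0,\phi_0)$. Its linearization in $(c,\phi)$ is invertible, because the derivative of the scalar compatibility equation in $c$ is
\[
p c_0^{p-1}|\Omega|-|\partial\Omega|=(p-1)|\partial\Omega|\neq0 .
\]
The implicit function theorem therefore gives local uniqueness near $(c_0,\phi_0)$ for small $\beta$. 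Both rescaled solutions lie in this neighborhood: one shows $\beta^{-1}(w_\beta-\bar w_\beta)$ is bounded by the estimate for $\psi_\beta$ above. Hence $u_\beta=v_\beta$.

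The main obstacle is making this last setup rigorous, namely choosing Banach spaces with the mean-zero constraint so that the map is $C^1$ and verifying the Fredholm invertibility. The fallback is a direct linearization: $\xi=(u_\beta-v_\beta)/\|u_\beta-v_\beta\|_\infty$ converges to a constant $k$. Integrating the equation for $u_\beta-v_\beta$, and using that $(u_\beta^p-v_\beta^p)/(u_\beta-v_\beta)\approx p\,u_\beta^{p-1}\approx p\beta|\partial\Omega|/|\Omega|$, yields
\[
k\bigl(p|\partial\Omega|-|\partial\Omega|\bigr)=0,
\]
so $k=0$, which contradicts $\|\xi\|_\infty=1$. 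I would pursue this version first.
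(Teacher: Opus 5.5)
Your argument is correct. The compactness step and the identification $u_0\equiv0$ are the same as in the paper, but your uniqueness proof takes a different route.

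\textbf{How the paper proves uniqueness.} The paper also normalizes $\tilde w=(u-v)/\|u-v\|_\infty$ and gets $\tilde w\to\pm1$ uniformly. It then uses only the \emph{sign}. For small $\beta$ one has, say, $u>v$ on $\bar\Omega$. Testing the equation for $u$ with $v$, and vice versa, gives $\int_\Omega uv\,(u^{p-1}-v^{p-1})\,dx=0$. This is impossible because $s\mapsto s^{p-1}$ is strictly increasing. The argument needs nothing beyond $u,v\to0$ uniformly.

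\textbf{What your fallback needs.} You integrate the linearized equation instead. That requires $a_\beta/\beta\to p|\partial\Omega|/|\Omega|$ uniformly, i.e.\ the sharp profile $u_\beta=c_0\beta^{1/(p-1)}(1+o(1))$ for every bounded solution. So you must first prove Theorem~\ref{Theorem1.4} under \eqref{l2.4}, which the paper does only afterwards. That is acceptable, with one repair. Your estimate $\|\psi_\beta\|_{C^1}\le C(m_\beta^p+\beta m_\beta)$ is circular as written, since it presupposes $\|u_\beta\|_\infty\lesssim m_\beta$. It closes in either of two ways:
\begin{itemize}
\item absorb the term $\|\psi_\beta\|_\infty^{p-1}\|\psi_\beta\|$ using $\|\psi_\beta\|_\infty\to0$; or
\item more simply, use the paper's normalization $u_\beta/\|u_\beta\|_\infty\to1$ together with $\int_\Omega u_\beta^p=\beta\int_{\partial\Omega}u_\beta$.
\end{itemize}

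\textbf{What your route buys.} It makes the nondegeneracy constant $(p-1)|\partial\Omega|$ explicit. With it, the implicit-function version is less delicate than you fear. Take mean-zero $\phi\in C^{2,\alpha}(\bar\Omega)$ and target space $C^{0,\alpha}(\bar\Omega)\times C^{1,\alpha}(\partial\Omega)$. The linearization $(\dot c,\dot\phi)\mapsto(-\Delta\dot\phi-pc_0^{p-1}\dot c,\ \partial_\nu\dot\phi+\dot c)$ is then an isomorphism precisely because that constant is nonzero. Two small points remain:
\begin{itemize}
\item discard the trivial root $c=0$ of the limiting system;
\item upgrade boundedness of $\beta^{-1}(w_\beta-\bar w_\beta)$ to convergence to $\phi_0$, using compactness and uniqueness of the limit problem.
\end{itemize}
Carried out, the implicit-function argument would also produce a positive solution for every $p>1$ and all small $\beta$ in any smooth bounded domain, supercritical exponents included. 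The paper leaves that existence question open. The paper's sign argument, by contrast, is shorter and gives uniqueness with no asymptotics at all.
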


\begin{proof}
Since $u_\beta$ is bounded, then by Lemma \ref{lemma2.3} we have that $u_\beta\in W^{2,q}(\Omega)$ for all $q<\infty$. Thus, by elliptic equation regularity theory, we can get there exists $u_0$ such that 
$$
u_\beta\to u_0 \quad \text{in}\ C^2_{loc}(\bar{\Omega}),
$$
and $u_0$ solves the equation 
\begin{equation}
\begin{cases}
-\Delta u_0=u_0^p,\quad  &\text{in}\ \Omega,\\
\frac{\partial u_0}{\partial \nu}=0, &\text{on}\ \partial\Omega.
\end{cases}
\end{equation}
Note that $u_0\ge 0$ and 
$$
\int_{\Omega}u_0^pdx=\int_{\Omega}-\Delta u_0dx=-\int_{\partial\Omega}\frac{\partial u_0}{\partial \nu}dS=0,
$$
Therefore, we have $u_0=0$.

In the following, we prove the uniqueness of bounded solutions equation \eqref{eq0.1}.
Suppose that $u$ and $v$ are two distinct positive solution. Let $w=u-v$, then $w$ is a solution of 
\begin{equation*}
\begin{cases}
-\Delta w=a(x)w,\quad &\text{in}\ \Omega,\\
\frac{\partial w}{\partial \nu}+\beta w=0, &\text{on}\ \partial\Omega,
\end{cases}
\end{equation*}
where $a(x)$ is defined by 
$$
a(x)=p(\theta u+(1-\theta)v)^{p-1}, \quad \theta\in(0,1).
$$
Note that $w\neq 0$, then $\|w\|_\infty>0$, where $\|\cdot\|_\infty$ is defined by $L^\infty(\Omega)$ norm. Define 
$$
\tilde{w}=\frac{w}{\|w\|_\infty},
$$
then we get 
\begin{equation*}
\begin{cases}
-\Delta \tilde{w}=a(x)\tilde{w},\quad &\text{in}\ \Omega,\\
\frac{\partial \tilde{w}}{\partial \nu}+\beta \tilde{w}=0, &\text{on}\ \partial\Omega,
\end{cases}
\end{equation*}
Since  $|\tilde{w}|\le 1$ and $a(x)\to 0$ uniformly in $\Omega$,  by elliptic equation regularity, we can conclude that $\tilde{w}\to \tilde{w}_0$ in $C_{loc}^2(\bar{\Omega})$ and $\tilde{w}_0$ is a solution of 
\begin{equation*}
\begin{cases}
-\Delta \tilde{w}_0=0,\quad &\text{in}\ \Omega,\\
\frac{\partial \tilde{w}_0}{\partial \nu}=0, &\text{on}\ \partial\Omega.
\end{cases}
\end{equation*}
This implies that $\tilde{w}_0=C$ for some constant $C$. We observe that $\|\tilde{w}\|_\infty=1$, so we get $C=1$ or $C=-1$. 
Without loss of generality, we may suppose that $C=1$, then for $\beta$ small enough we get $w>0$. 
Direct computation yields that 
$$
\int_{\Omega}\nabla u\nabla vdx+\beta\int_{\partial\Omega}uvdS=\int_{\Omega}u^pvdx, 
$$
and
$$
\int_{\Omega}\nabla v\nabla udx+\beta\int_{\partial\Omega}vudS=\int_{\Omega}v^pudx, 
$$
Thus, we have 
\begin{equation}\label{cgl3.1}
\int_{\Omega}u^pv-v^pudx=0.
\end{equation}
It follows from \eqref{cgl3.1} that 
\begin{equation*}
0=\int_{\Omega}u^pv-v^pudx=\int_{\Omega}uv(u^{p-1}-v^{p-1})dx=\int_{\Omega}b(x)uvwdx, 
\end{equation*}
where $b(x)$ is defined by 
$$
b(x)=(p-1)(\vartheta u+(1-\vartheta)v)^{p-2}, \quad \vartheta\in(0,1).
$$
We observe that  $b(x)>0$ since $p>1$.  By the facts that $u$, $v$ and $w$ are positive, we have 
\begin{equation*}
\int_{\Omega}b(x)uvwdx>0.
\end{equation*}
This is a contradiction which ends the proof.
\end{proof}

As a direct consequence Theorem \ref{lemma2.5}, it is sufficient to check that condition \eqref{l2.4} is satisfied for subcritical case, critical case and supercritical case respectively.

\subsection{The case $1<p<2^*-1$}

\begin{lemma}\label{lemma2.7}
Suppose that $1<p<2^*-1$. Then  the condition \eqref{l2.4} is satisfied for $\beta$ sufficiently small.
\end{lemma}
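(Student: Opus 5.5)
We argue by contradiction with a Gidas--Spruck type blow-up argument adapted to the Robin condition. Suppose there are $\beta_n\to0$ and positive solutions $u_n:=u_{\beta_n}$ of \eqref{eq0.1} with $M_n:=\max_{\bar\Omega}u_n=u_n(x_n)\to\infty$. Set $\lambda_n:=M_n^{-\frac{p-1}{2}}\to0$ and
\[
v_n(y):=M_n^{-1}u_n(x_n+\lambda_n y),\qquad y\in\Omega_n:=\lambda_n^{-1}(\Omega-x_n).
\]
Then $0<v_n\le1$, $v_n(0)=1$ and $-\Delta v_n=v_n^p$ in $\Omega_n$. The boundary condition becomes
\[
\frac{\partial v_n}{\partial\nu}+\beta_n\lambda_n v_n=0 \quad\text{on }\partial\Omega_n,
\]
and here the coefficient $\beta_n\lambda_n\to0$. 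This is the key point: the rescaled boundary condition tends to a Neumann condition.

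Next we pass to the limit, using local elliptic estimates. The interior estimates are standard. Near the boundary we use Lemma \ref{lemma2.3} on flattened charts, with a cutoff, applied to the Neumann data $\varphi=-\beta_n\lambda_n v_n$. Since $|v_n|\le1$, the $W^{1,q}$ norm of this data is controlled by $\beta_n\lambda_n\|v_n\|_{W^{1,q}}$, and this term can be absorbed. This gives uniform $W^{2,q}_{loc}$ bounds, hence $C^{1,\alpha}_{loc}$ convergence along a subsequence. We then split into two cases according to $d_n:=\mathrm{dist}(x_n,\partial\Omega)/\lambda_n$.
\begin{itemize}
\item If $d_n\to\infty$, then $\Omega_n\to\R^N$ and $v_n\to v$, where $v$ is a bounded solution of $-\Delta v=v^p$ in $\R^N$ with $v(0)=1$ and $v\ge0$. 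This contradicts the Liouville theorem of Gidas--Spruck for $1<p<2^*-1$.
\item If $d_n$ stays bounded, then after a rotation $\Omega_n$ converges to a half-space $\{y_N>-d\}$. The limit $v$ satisfies $-\Delta v=v^p$, $v\ge0$, $v\not\equiv0$, and $\partial_{y_N}v=0$ on the boundary. Reflecting evenly across the boundary produces a nontrivial bounded nonnegative solution on all of $\R^N$. By the strong maximum principle it is positive, which again contradicts the Liouville theorem.
\end{itemize}
Either way we reach a contradiction, so $u_\beta\le C$ for all small $\beta$, which is \eqref{l2.4}.

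We expect the main obstacle to be the uniform boundary regularity in the second case. One must check that the $W^{2,q}$ constant in Lemma \ref{lemma2.3}, applied on the rescaled (and flattening) domains $\Omega_n$, is uniform. This holds because the boundary curvatures scale like $\lambda_n\to0$, so the charts converge to flat ones. One must also verify that the reflected limit is a genuine $C^2$ (weak, and then classical) solution across the hyperplane. Both points are standard but need care. The fact that $\beta_n\lambda_n\to0$ (in fact only boundedness is needed) is what makes the Robin term harmless in the limit.
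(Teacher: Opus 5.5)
Your proposal is correct and follows essentially the same route as the paper: blow-up rescaling at the maximum point, the interior/boundary dichotomy according to $\mathrm{dist}(x_n,\partial\Omega)/\lambda_n$, and the Gidas--Spruck Liouville theorem, applied directly in the interior case and after even reflection of the Neumann half-space limit in the boundary case. Your write-up is slightly more precise than the paper's sketch: you record the correct rescaled Robin coefficient $\beta_n\lambda_n\to0$, you identify the correct limiting half-space $\{y_N>-d\}$, and you make explicit the uniform boundary $W^{2,q}$ estimates that the paper leaves implicit.
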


\begin{proof}
The proof is standard, one can check it by scaling arguments in \cite{GS1}. We only sketch the proof.  If Lemma \ref{lemma2.7} is false,  then we may suppose that $\|u_\beta\|_\infty\to \infty$ as $\beta\to 0$.  Let $x_\beta\in \Omega$ be such that 
$$
u_\beta(x_\beta)=\|u_\beta\|_{\infty}.
$$
Define
\begin{equation*}
v_\beta(y)=\frac{1}{\|u_\beta\|_\infty}u_\beta(x),\quad x=\frac{y}{\|u_\beta\|_\infty^{\frac{p-1}{2}}}+x_\beta\in\Omega.
\end{equation*}
then we get 
\begin{equation*}
\begin{cases}
-\Delta v_\beta=v_\beta^p,\quad &\text{in}\ \Omega_\beta,\\
\nabla v_\beta\cdot \tilde{\nu}_y+\beta v_\beta=0, &\text{on}\ \partial\Omega_\beta,
\end{cases}
\end{equation*}
where $\Omega_\beta=\|u_\beta\|_\infty^{\frac{p-1}{2}}(\Omega-x_\beta)$ and $\tilde{\nu}_y$ is defined by  
$$
\tilde{\nu}_y=\|u_\beta\|_\infty^{\frac{p-1}{2}}\nu_x,\qquad x=\frac{y}{\|u_\beta\|_\infty^{\frac{p-1}{2}}}+x_\beta\in\partial\Omega.
$$
In the following, we will distinguish two cases:

(i) If $\|u_\beta\|^{\frac{p-1}{2}}_\infty dist(x_\beta,\partial\Omega)\to \infty$ as $\beta\to 0$, then $\Omega_\beta\to \R^N$;

(ii) If $\|u_\beta\|^{\frac{p-1}{2}}_\infty dist(x_\beta,\partial\Omega)\to C<\infty$ as $\beta\to 0$, then $\Omega_\beta\to \R_+^N$.

If (i) occurs. By the fact that $\|v_\beta\|_\infty=1$,  we have that there exists $v_0$ such that 
$$
v_\beta\to v_0,\quad \text{in}\ C^2_{loc}(\R^N),
$$
where $v_0$ solves the equation
\begin{equation}\label{eq3.3}
\begin{cases}
-\Delta v_0=v_0^p,\quad \text{in}\ \R^N,\\
v_0(0)=1.
\end{cases}
\end{equation}
Note that equation \eqref{eq3.3} has no positive solution since $1<p<2^*-1$. This is a contradiction. Thus, we prove that case (i) can't occur.

If (ii) occurs, then $x_\beta\to x_0\in\partial\Omega$. By straightening $\partial\Omega$ in a neighborhood of $x_0$, we may suppose that near $x_0$, $\partial\Omega$ is contained in hyperplane $x_N=0$. Arguing as before, we can get that there exists $v_0$ such that $v_0$ is a positive solution of 
\begin{equation}\label{e3.8}
\begin{cases}
-\Delta v_0=v_0^p,\quad &\text{in}\ \R^N_+,\\
\frac{\partial v_0}{\partial y_N}=0, &\text{on}\ y_N=0.
\end{cases}
\end{equation}
Making a even extension with respect to the hyperplane $y_N=0$, that is 
\begin{equation*}
\tilde{v}_0(y)=
\begin{cases}
v_0(y_1,\cdots,y_{N-1},y_N),\quad &\text{if}\ y_N\ge0,\\
v_0(y_1,\cdots,y_{N-1},-y_N),\quad &\text{if}\ y_N<0.
\end{cases}
\end{equation*}
Thus, we can get that  $\tilde{v}_0$ is a solution of equation \eqref{eq3.3}. 
This is also a contradiction. Hence, we prove that case (ii) can't occur.
\end{proof}

\subsection{The case $p=2^*-1$} 
In this case, we mainly study the uniqueness of ground state solutions.

\begin{lemma}\label{lemma2.8}
If $u_\beta$ is a ground state solution, then $u_\beta$ satisfies the condition \eqref{l2.4} for $\beta$ sufficiently small.
\end{lemma}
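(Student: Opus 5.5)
We argue by contradiction, using the same blow-up scheme as in Lemma \ref{lemma2.7}. The new ingredient is an energy bound that rules out bubbles at the critical exponent.

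\textbf{Energy bound.} First we record that the ground state energy tends to zero. Testing with the constant function as in the existence theorem for $p=2^*-1$ gives
$$
m_\beta\le \frac1N\beta^{\frac{p+1}{p-1}}|\partial\Omega|^{\frac{p+1}{p-1}}|\Omega|^{-\frac{2}{p-1}}\to0 .
$$
Since $u_\beta\in\mathcal N$, we have $J_\beta(u_\beta)=\frac1N\int_\Omega u_\beta^{2^*}dx$. Hence
$$
\int_\Omega u_\beta^{2^*}dx=\int_\Omega|\nabla u_\beta|^2dx+\beta\int_{\partial\Omega}u_\beta^2dS=Nm_\beta\to0 .
$$

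\textbf{Blow-up.} Suppose $M_\beta=\|u_\beta\|_\infty\to\infty$ along a sequence. Let $x_\beta$ be a maximum point and set $v_\beta(y)=M_\beta^{-1}u_\beta(x_\beta+M_\beta^{-\frac{p-1}{2}}y)$. For $p=2^*-1$ we have $\frac{p-1}{2}=\frac{2}{N-2}$, so this rescaling preserves both the $L^{2^*}$ norm and the Dirichlet energy:
$$
\int_{\Omega_\beta}v_\beta^{2^*}dy=\int_\Omega u_\beta^{2^*}dx\to0,\qquad \int_{\Omega_\beta}|\nabla v_\beta|^2dy=\int_\Omega|\nabla u_\beta|^2dx\to0 .
$$
The boundary condition becomes $\partial_\nu v_\beta+\beta M_\beta^{-\frac{2}{N-2}}v_\beta=0$, and the coefficient is still small. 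Since $0\le v_\beta\le1$ and $v_\beta(0)=1$, we apply local elliptic estimates: interior ones in case (i), and in case (ii) boundary $W^{2,q}$ estimates via Lemma \ref{lemma2.3} after flattening the boundary. These give $v_\beta\to v_0$ in $C^1_{loc}$, with $v_0(0)=1$, on $\R^N$ or on a half-space. By Fatou's lemma, $\int v_0^{2^*}\le\liminf\int_{\Omega_\beta}v_\beta^{2^*}=0$. Thus $v_0\equiv0$, contradicting $v_0(0)=1$.

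We do not need any classification of entire critical solutions: the vanishing energy alone yields the contradiction. This is exactly where the ground state hypothesis enters, since a general positive solution could carry a nonvanishing bubble of energy $\frac1N\mathcal S^{N/2}$.

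\textbf{Main obstacle.} The delicate step is case (ii), where $x_\beta$ approaches $\partial\Omega$ and we need uniform $C^1$ control of $v_\beta$ up to the flattened boundary. The rescaled domains approach a half-space with a Robin coefficient $\beta M_\beta^{-2/(N-2)}\to0$, and the point is that bounded data with a vanishing boundary coefficient give uniform $W^{2,q}$ bounds. If the limit point $0$ sits at a finite distance from the limiting boundary, the convergence still holds up to that boundary, so $v_0(0)=1$ persists. Once this is in place, the argument concludes that $\|u_\beta\|_\infty\le C$ for small $\beta$, which is condition \eqref{l2.4}.
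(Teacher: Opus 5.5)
Your proposal is correct and shares the paper's framework. Both testing with the constant function to get $m_\beta\to0$, and both blow up at a maximum point to get a limit $v_0$ with $v_0(0)=1$, either on $\mathbb{R}^N$ or on a half-space with Neumann condition. The difference is in how the contradiction is reached.

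The paper identifies $v_0$. In the interior case it invokes the classification of positive entire solutions of $-\Delta v=v^{2^*-1}$ to write $v_0$ as an Aubin--Talenti bubble. In the boundary case it reflects evenly to reduce to that classification. This yields the quantized bounds $\liminf J_\beta(u_\beta)\ge\frac1N\mathcal{S}^{N/2}$, respectively $\frac{1}{2N}\mathcal{S}^{N/2}$.

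You observe instead that, since $m_\beta\to0$, any positive lower bound suffices. Scale invariance of the $L^{2^*}$ norm together with Fatou already forces $\int v_0^{2^*}=0$, which is incompatible with the continuity of $v_0$ up to the boundary and $v_0(0)=1$.

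What your route buys is that it is more elementary. It needs no Liouville/classification theorem, no reflection, and no value of $\mathcal{S}$. The invariance of the Dirichlet energy that you record is not even used.

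What the paper's route buys is robustness in another direction. It only needs $\limsup_{\beta\to0} m_\beta<\frac{1}{2N}\mathcal{S}^{N/2}$. So it would still give uniform $L^\infty$ bounds for ground states whose energies merely stay below the compactness threshold without tending to zero. Your argument genuinely relies on $m_\beta\to0$.
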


\begin{proof}
In the following, we will prove that $u_\beta\le C$ for $\beta$ sufficient small. If not, we may suppose that $\|u_\beta\|_\infty\to \infty$ as $\beta\to 0$. Arguing as Lemma \ref{lemma2.7}, then we can deduce that $v_0$ solves equation \eqref{eq3.3} or \eqref{e3.8}. 
If $v_0$ solves equation \eqref{eq3.3}, then $v_0$ must have the form 
$$
v_0(y)=[N(N-2)]^{\frac{N-2}{4}}\mu^{\frac{N-2}{2}}\Big(1+\mu^2|y|^2\Big)^{-\frac{N-2}{2}}
$$
for some $\mu>0$. Since $v_0(0)=1$, then we have 
$$
\mu=[N(N-2)]^{-\frac{1}{2}}. 
$$
We observe that 
$$
\lim_{\beta\to0}J_\beta(u_\beta)=\lim_{\beta\to0}\frac{1}{N}\int_{\Omega}u_\beta^{p+1}dx=\lim_{\beta\to0}\frac{1}{N}\int_{\Omega_\beta}v_\beta^{p+1}dy\ge \frac{1}{N}\int_{\R^N}v_0^{p+1}dy=\frac{1}{N}\mathcal{S}^{\frac{N}{2}},
$$
which is a contradiction since 
$$
\lim_{\beta\to0}J_\beta(u_\beta)=\lim_{\beta\to 0}m_\beta=0.
$$
On the other hand, if $v_0$ solves equation \eqref{e3.8}, then by even extension with respect to $y_N$ we can 
get a positive solution $\tilde{v}_0$ of equation \eqref{eq3.3}. Hence, we can get 
\begin{equation*}
\lim_{\beta\to0}J_\beta(u_\beta)\ge \frac{1}{N}\int_{\R_+^N}v_0^{p+1}dy=\frac{1}{2N}\int_{\R^N}\tilde{v}_0^{p+1}dy=\frac{1}{2N}\mathcal{S}^{\frac{N}{2}}.
\end{equation*}
This is also a contradiction. So we prove that $u_\beta$ is bounded.
\end{proof}

\subsection{The case $p>2^*-1$} In this case, we mainly consider the uniqueness of positive radial solutions.

\begin{lemma}\label{lemma2.6}
Suppose that $p>1$ and $\Omega$ is a ball. If $u_\beta$ is a radial solution, then 
$\|u_\beta\|_{\infty}\to 0$  as $\beta\to 0$. Moreover, the condition \eqref{l2.4} is satisfied for $\beta$ sufficiently small.
\end{lemma}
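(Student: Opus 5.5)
Write $\Omega=B_R(0)$ and $u=u_\beta(r)$ for the radial solution. The plan rests on two facts: positive radial solutions are radially decreasing, and the Robin condition at $r=R$ links the boundary value to the total mass of $u^p$.

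\textbf{Step 1 (monotonicity and two integral identities).} Integrating $-(r^{N-1}u')'=r^{N-1}u^p$ from $0$ to $r$ gives
\[
u'(r)=-r^{1-N}\int_0^r s^{N-1}u^p\,ds<0 .
\]
So $u$ is decreasing, $\|u_\beta\|_\infty=u(0)$, and $\min u=u(R)$. Integrating the equation over $\Omega$ and using the boundary condition gives
\[
\int_\Omega u^p\,dx=\beta|\partial\Omega|\,u(R).
\]
Since $u^p\ge u(R)^p$, this yields $u(R)^{p-1}\le \beta|\partial\Omega|/|\Omega|$. Hence $u(R)\to0$ as $\beta\to0$, because $p>1$.

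\textbf{Step 2 (compare $u(0)$ with $u(R)$).} Using $u\le u(0)$ in the formula for $u'$ gives $|u'(r)|\le r\,u(0)^p/N$. Integrating from $0$ to $R$,
\[
u(0)-u(R)\le \frac{R^2}{2N}\,u(0)^p .
\]
Set $M=u(0)$ and $\varepsilon=u(R)\to0$. Then $M\le\varepsilon+cM^p$ with $c=R^2/(2N)$.

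This inequality alone permits two regimes: $M\approx\varepsilon$, or $M$ bounded below by roughly $c^{-1/(p-1)}$. The main obstacle is excluding the second regime, the possibility that $u_\beta(0)$ stays large or blows up while $u(R)\to0$.

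\textbf{Step 3 (excluding the large regime).} For this I would use the scaling structure. Every positive radial solution of $-\Delta u=u^p$ that is regular at the origin has the form $u(r)=\delta^{2/(p-1)}U(\delta r)$, where $U$ is the solution with $U(0)=1$. The sign of $U$ is not yet known, so $U$ may have a first zero $r_0\le\infty$. The Robin condition reads
\[
\delta U'(\delta R)+\beta U(\delta R)=0,
\]
with $U(\delta R)>0$, so $\delta R<r_0$. Since $U'<0$ on $(0,r_0)$, the quantity $g(t)=-tU'(t)/U(t)$ is positive there. The boundary condition is exactly $g(\delta R)=\beta R$.

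I would then show that $g$ is bounded below by a positive constant away from $t=0$. Near $0$ we have $g(t)\sim t^2/N$. If $r_0<\infty$ then $g\to\infty$ as $t\to r_0$. If $r_0=\infty$, the supercritical asymptotics recalled in Section 3, namely \eqref{li-2.10}–\eqref{li-2.11}, give $g\to 2/(p-1)$. In the critical case the explicit bubble gives $g\to N-2$. In the subcritical case there is no positive entire solution, so $r_0<\infty$. Therefore $g(t)\ge c_0>0$ for $t\ge t_0$, for any fixed $t_0>0$.

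Consequently $g(\delta R)=\beta R\to0$ forces $\delta R\to0$. This gives $\|u_\beta\|_\infty=\delta^{2/(p-1)}\to0$, which proves both claims of the lemma, including \eqref{l2.4}. The delicate point is the positivity of $g$ on compact subsets of $(0,r_0)$ together with its behaviour at infinity, which relies on the cited Serrin–Zou/Rupflin–Struwe asymptotics.
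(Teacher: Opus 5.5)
Your argument is correct, and it takes a genuinely different route from the paper.

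\textbf{Common start.} Both proofs begin with the identity $\int_\Omega u_\beta^p\,dx=\beta|\partial\Omega|\,u_\beta(R)$ and the resulting bound $u_\beta(R)^{p-1}\le\beta|\partial\Omega|/|\Omega|$.

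\textbf{The paper's route.} The paper stays with integral estimates. By H\"older, $\int_\Omega u_\beta^{p-1}\,dx=O(\beta)$. Assuming $\|u_\beta\|_\infty\ge C>0$, it tests the equation for $\tilde u_\beta=u_\beta/\|u_\beta\|_\infty$ against $\tilde u_\beta$ and gets $\tilde u_\beta\to0$ in $H^1$. It then declares this incompatible with $\|\tilde u_\beta\|_\infty=1$. This avoids any classification of entire solutions, and uses radial symmetry only through the constancy of $u_\beta$ on $\partial\Omega$.

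Its last step, however, is automatic only when $\|u_\beta\|_\infty$ stays bounded. In that case $u_\beta^{p-1}$ is a bounded coefficient, elliptic regularity upgrades the convergence, and the case is in any event covered by Theorem~\ref{lemma2.5}. If $\|u_\beta\|_\infty\to\infty$, the normalized functions could concentrate at the origin. For $N\ge3$, profiles such as $U(\mu x)$ with $\mu\to\infty$ have $H^1(B_R)$-norm tending to $0$ while their supremum is $1$. So $H^1$-smallness alone gives no contradiction there.

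\textbf{What your route buys.} Your reduction writes $u_\beta=\delta^{2/(p-1)}U(\delta r)$ and turns the Robin condition into $g(\delta R)=\beta R$. The lower bound $\inf_{t\ge t_0}g>0$ then forces $\delta\to0$. That bound comes from the Liouville theorem, the explicit bubble, and \eqref{li-2.10}--\eqref{li-2.11}, all of which the paper already invokes in Lemmas~\ref{lemma2.7}, \ref{lemma2.8} and in the existence proof for the ball. This excludes exactly the blow-up scenario, uniformly over all radial solutions. The price is that it is tied to the ODE structure of the ball.

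It also gives more. Since $g(t)\sim t^2/N$, you get $\delta^2=(N\beta/R)(1+o(1))$, and $U(\delta r)\to1$ uniformly on $[0,R]$. Together these recover Theorem~\ref{Theorem1.4} in the ball directly. Your Step 2 is not needed for the conclusion.
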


\begin{proof}
Without loss of generality, we may suppose that $\Omega=B_{1}(0)$. By strong maximum principle, we get 
\begin{equation*}
u_\beta(r)>u_\beta(1),\quad \forall r<1,
\end{equation*}
where $r=|x|$. Since 
\begin{equation}
\beta\int_{\partial B_1(0)}u_\beta\, dS
=-\int_{\partial B_1(0)}\frac{\partial u_\beta}{\partial\nu}dS
=\int_{B_1(0)}-\Delta u_\beta\, dx
=\int_{B_1(0)}u_\beta^p\, dx,
\end{equation}
then we have 
$$
\beta |\partial B_1(0)|u_\beta(1)=\int_{B_1(0)}u_\beta^pdx>[u_\beta(1)]^p| B_1(0)|.
$$
This implies that 
$$
u_\beta(1)<\Big( \frac{\beta |\partial B_1(0)|}{| B_1(0)|}\Big)^{\frac{1}{p-1}}=N^{\frac{1}{p-1}}\beta^{\frac{1}{p-1}}.
$$
By \eqref{2.7}, we get 
\begin{equation}\label{2.8}
\int_{B_1(0)}u_\beta^pdx=\beta |\partial B_1(0)|u_\beta(1)<N^{\frac{1}{p-1}}\beta^{\frac{p}{p-1}}|\partial B_1(0)|.
\end{equation}

In the following, we will prove that $\|u_\beta\|_\infty\to 0$. We argue by contradiction, if there exists a constant $C>0$ such that $\|u_\beta\|_\infty\geq C>0$ as $\beta\to 0$.
Let
$$
\tilde{u}_\beta=\frac{u_\beta}{\|u_\beta\|_\infty};
$$
then $\tilde{u}_\beta$ solves the following equation:
\begin{equation}
\begin{cases}
-\Delta \tilde{u}_\beta=u_\beta^{p-1}\tilde{u}_\beta,\quad &\text{in}\ B_1(0),\\
\frac{\partial \tilde{u}_\beta}{\partial \nu}+\beta \tilde{u}_\beta=0, &\text{on}\ \partial B_1(0).
\end{cases}
\end{equation}
A direct computation yields 
\begin{equation}\label{li-2.19}
\int_{B_1(0)}|\nabla \tilde{u}_\beta|^2dx+\beta\int_{\partial B_1(0)}\tilde{u}_\beta^2dS=\int_{B_1(0)}u_\beta^{p-1}\tilde{u}_\beta^2dx.
\end{equation}
Since $0<\tilde{u}_\beta\leq 1$, by  \eqref{2.8}, we can deduce that 
\begin{equation}\label{li-2.20}
\int_{B_1(0)}u_\beta^{p-1}\tilde{u}_\beta^2dx\leq \int_{B_1(0)}u_\beta^{p-1}dx\leq |B_1(0)|^{\frac{1}{p}}\Big(\int_{B_1(0)}u_\beta^{p}dx\Big)^{\frac{p-1}{p}}\leq |\partial B_1(0)|\beta.
\end{equation}
Combining \eqref{li-2.19} with \eqref{li-2.20}, we obtain
\begin{equation}\label{li-2.21}
\int_{B_1(0)}|\nabla \tilde{u}_\beta|^2dx\to 0,\quad \text{as}\ \beta\to 0.
\end{equation}
On the other hand, we observe that 
$$
\int_{B_1(0)}u_\beta^{p-1}\tilde{u}_\beta^2dx=\|u_\beta\|_\infty^{p-1}\int_{B_1(0)}\tilde{u}_\beta^{p+1}dx\geq C^{p-1}\int_{B_1(0)}\tilde{u}_\beta^{p+1}dx.
$$
Combining this with \eqref{li-2.20}, we obtain $\tilde{u}_\beta\to 0$ in $L^{p+1}(B_1(0))$ as $\beta\to 0$. Since $p>1$, we derive that 
\begin{equation}\label{li-2.22}
\int_{B_1(0)}\tilde{u}_\beta^2dx\to 0, \quad\text{as}\  \beta\to 0. 
\end{equation}
It follows from \eqref{li-2.21} and \eqref{li-2.22} that $\tilde{u}_\beta\to 0$ in $H^{1}(B_1(0))$ as $\beta\to 0$. Moreover, we have $\tilde{u}_\beta\to 0$ as $\beta\to 0$. Hence, $\tilde{u}_\beta=o(1)$ as $\beta\to 0$, which contradicts $\|\tilde{u}_\beta\|_\infty=1$.
\end{proof}

{\bf Proof of Theorem \ref{Theorem0.1}:}  Firstly, by Theorem \ref{Theorem2.4}, we can know that claim (1) is true. Secondly,  Lemma \ref{lem4.2} implies that claim (2) is also true. Then for $p>1$, as a direct consequence of Theorem \ref{lemma2.5}, it is sufficient to examine the cases in which this condition \eqref{l2.4} is actually satisfied. 
As a direct consequence of Lemma \ref{lemma2.7}-\ref{lemma2.6}, we can prove  \eqref{l2.4}. Hence, claim (3)-(5) are true.

\section{Asymptotic behavior of the solution}
In this section, we study the asymptotic behavior of the solution to equation \eqref{eq0.1}. We begin with the case $0\le p<1$.

\subsection{The case $0\le p<1$}

In order to give an idea of the general phenomenon, let us briefly consider the case of the ball, i.e.$\Omega=B_1(0)$. 
 In this case, by Theorem \ref{Theorem0.1},  positive solutions to  \eqref{eq0.1} are  radial. 
Moreover, by means of maximum principle, we get 
$$
u(r)=u(|x|)> u(1),\quad \forall r<1.
$$
It follows from \eqref{2.7} that 
$$
\beta u(1)=\int_{0}^1u^pr^{N-1}dr> \frac{[u(1)]^p}{N},
$$
which implies that 
$$
u(1)> N^{-\frac{1}{1-p}}\beta^{-\frac{1}{1-p}}\to \infty,\quad \text{as}\ \beta\to 0.
$$
Thus, we observe that radial solutions to equation \eqref{eq0.1} blow up in the ball. Naturally, for a general bounded domain $\Omega$, one might ask:\\
{\em do all positive solutions to equation \eqref{eq0.1} still blow up as $\beta\to 0$?}\\
Moreover, what is their exact asymptotic behavior as $\beta\to 0$? In order to answer these questions,  
we first prove an estimate for $u_\beta$ in general domains.

\begin{lemma}\label{Lemma3.1}
Suppose that $0\leq p<1$; then it holds that 
$$
\|u_\beta\|_\infty\ge \beta^{-\frac{1}{1-p}}|\partial\Omega|^{-\frac{1}{1-p}}|\Omega|^{\frac{1}{1-p}}.
$$
\end{lemma}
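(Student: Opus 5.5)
The plan is to integrate the equation over $\Omega$ and combine this identity with the pointwise bound $u_\beta \le \|u_\beta\|_\infty$, both inside $\Omega$ (after using $p<1$) and on $\partial\Omega$.

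First, integrate $-\Delta u_\beta = u_\beta^p$ over $\Omega$. The divergence theorem and the Robin condition $\frac{\partial u_\beta}{\partial\nu} = -\beta u_\beta$ give
\[
\int_\Omega u_\beta^p\,dx = -\int_{\partial\Omega}\frac{\partial u_\beta}{\partial\nu}\,dS = \beta\int_{\partial\Omega}u_\beta\,dS .
\]
This is the same identity used in Lemma \ref{lemma2.6} for the ball, and it holds verbatim for a general smooth domain.

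Second, bound each side by $M:=\|u_\beta\|_\infty$. On the boundary side, $\beta\int_{\partial\Omega}u_\beta\,dS\le \beta M|\partial\Omega|$. On the interior side we need a lower bound. Since $0<u_\beta\le M$ and $p-1\le 0$, we have
\[
u_\beta^p = u_\beta\, u_\beta^{p-1}\ge u_\beta M^{p-1}.
\]
This gives
\[
\int_\Omega u_\beta^p\,dx \ge M^{p-1}\int_\Omega u_\beta\,dx .
\]
This still involves $\int_\Omega u_\beta$, which is awkward to control, so instead I would use $\int_\Omega u_\beta^p$ paired with the boundary term differently. Write $u_\beta = u_\beta^{p}\, u_\beta^{1-p}\le u_\beta^p M^{1-p}$ on $\partial\Omega$. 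Then
\[
\int_\Omega u_\beta^p\,dx=\beta\int_{\partial\Omega}u_\beta\,dS\le \beta M^{1-p}\int_{\partial\Omega}u_\beta^p\,dS .
\]
Comparing $\int_{\partial\Omega}u_\beta^p$ with $\int_\Omega u_\beta^p$ is, however, not immediate in general.

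The cleanest route is to use the minimum instead. Let $m=\min_{\bar\Omega}u_\beta$; by Lemma \ref{Lemma2.1} it is positive and attained at a boundary point. Then $\int_\Omega u_\beta^p\,dx\ge m^p|\Omega|$, which is not directly useful. I would therefore test the equation differently and integrate against $u_\beta^{-p}$, which lies in $H^1$ because $u_\beta\ge c>0$:
\[
\int_\Omega \nabla u_\beta\cdot\nabla(u_\beta^{-p})\,dx+\beta\int_{\partial\Omega}u_\beta^{1-p}\,dS=\int_\Omega 1\,dx=|\Omega| .
\]
The gradient term equals $-p\int_\Omega u_\beta^{-p-1}|\nabla u_\beta|^2\,dx\le 0$. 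Hence
\[
|\Omega|\le \beta\int_{\partial\Omega}u_\beta^{1-p}\,dS\le \beta|\partial\Omega|M^{1-p}.
\]
Rearranging gives $M\ge \beta^{-\frac{1}{1-p}}|\partial\Omega|^{-\frac1{1-p}}|\Omega|^{\frac1{1-p}}$, which is the claim. For $p=0$ the argument is simply the direct integral identity with no gradient term.

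The main obstacle, which this choice of test function resolves, is that the naive integral identity mixes interior and boundary integrals of different powers of $u_\beta$. Testing with $u_\beta^{-p}$ equalizes the two sides: the interior becomes exactly $|\Omega|$, and the gradient term has a favorable sign precisely because $p\ge0$. The only technical point left is justifying $u_\beta^{-p}\in H^1(\Omega)$, which follows from Lemma \ref{Lemma2.1} together with the classical regularity of $u_\beta$.
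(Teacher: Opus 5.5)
Your final argument, testing with $u_\beta^{-p}$, is correct, but it takes a different route from the paper's.

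The paper tests the equation against the positive first Robin eigenfunction $\phi$, and the eigenvalue equation against $u_\beta$. This gives $\int_\Omega\phi\,u_\beta^p\bigl(1-\lambda_{1,\beta}u_\beta^{1-p}\bigr)\,dx=0$, hence $\|u_\beta\|_\infty\ge\lambda_{1,\beta}^{-\frac{1}{1-p}}$. Only then does it insert the Rayleigh-quotient bound $\lambda_{1,\beta}\le\beta|\partial\Omega|/|\Omega|$ from Lemma~\ref{lemma2.2}.

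You instead put $u_\beta^{-p}\in C^1(\bar\Omega)\subset H^1(\Omega)$ directly into the weak formulation, which is legitimate by Lemma~\ref{Lemma2.1}. This yields the exact identity
\[
|\Omega|+p\int_\Omega u_\beta^{-p-1}|\nabla u_\beta|^2\,dx=\beta\int_{\partial\Omega}u_\beta^{1-p}\,dS .
\]
Your route is more elementary: it needs no eigenfunction and no spectral theory. It also gives a slightly stronger, localized conclusion. The boundary mean of $u_\beta^{1-p}$ is already at least $|\Omega|/(\beta|\partial\Omega|)$, so $\max_{\partial\Omega}u_\beta$ itself obeys the bound.

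The paper's route buys a different strengthening, the intermediate estimate $\|u_\beta\|_\infty\ge\lambda_{1,\beta}^{-\frac{1}{1-p}}$. As a bound on $\|u_\beta\|_\infty$ this is strictly better than the stated one. Indeed, the constant $1$ is not a Robin eigenfunction, so $\lambda_{1,\beta}<\beta|\partial\Omega|/|\Omega|$. It also ties the blow-up rate to $\lambda_{1,\beta}$, matching the lower bound in \eqref{1.4} when $p=0$.

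In a write-up, delete the abandoned attempts: the $M^{p-1}$ bound, the boundary-versus-interior comparison, and the use of the minimum. Go straight to the test function $u_\beta^{-p}$.
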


\begin{proof}
Let $\phi>0$ be the first eigenfunction of Robin problem 
\begin{equation}\label{eq0.9}
\begin{cases}
-\Delta \phi=\lambda_{1,\beta}\phi,\quad &\text{in}\ \Omega,\\
\frac{\partial \phi}{\partial\nu}+\beta\phi=0 &\text{on}\ \partial\Omega.
\end{cases}
\end{equation}
Multiplying \eqref{eq0.9} by $u_\beta$ and integrating by parts, we get 
\begin{equation}\label{eq0.10}
\int_{\Omega}\nabla \phi\nabla u_\beta dx+\beta\int_{\partial\Omega}\phi u_\beta dS=\lambda_{1,\beta}\int_{\Omega}\phi u_\beta dx.
\end{equation}
On the other hand, multiplying equation \eqref{eq0.1} by $\phi$ and integrating by parts, we have 
\begin{equation}\label{eq0.11}
\int_{\Omega}\nabla u_\beta\nabla \phi dx+\beta\int_{\partial\Omega} u_\beta\phi dS=\int_{\Omega}u_\beta^p\phi dx.
\end{equation}
It follows from \eqref{eq0.10} and \eqref{eq0.11} that 
\begin{equation}\label{eq0.12}
\int_{\Omega}\phi u_\beta^p(1-\lambda_{1,\beta}u_\beta^{1-p})dx=0.
\end{equation}
Thus, we claim that 
\begin{equation}\label{eq3.5}
\min_{\Omega}(1-\lambda_{1,\beta}u_\beta^{1-p})\leq 0.
\end{equation}
If not, we may suppose that 
\begin{equation*}
\min_{\Omega}(1-\lambda_{1,\beta}u_\beta^{1-p})> 0,
\end{equation*}
which implies that 
$$
\int_{\Omega}\phi u_\beta^p(1-\lambda_{1,\beta}u_\beta^{1-p})dx\ge \min_{\Omega}(1-\lambda_{1,\beta}u_\beta^{1-p})\int_{\Omega}\phi u_\beta^pdx>0.
$$
This is a contradiction with \eqref{eq0.12}. So we prove \eqref{eq3.5}.

Note that \eqref{eq3.5} is equivalent to 
$$
\|u_\beta\|_\infty\ge \lambda_{1,\beta}^{-\frac{1}{1-p}}.
$$
Then by Lemma \ref{lemma2.2}, we have 
$$
\lambda_{1,\beta}\leq \beta\frac{|\partial\Omega|}{|\Omega|}.
$$
So we can get 
$$
\|u_\beta\|_\infty\ge \beta^{-\frac{1}{1-p}}|\partial\Omega|^{-\frac{1}{1-p}}|\Omega|^{\frac{1}{1-p}}.
$$
\end{proof}

{\bf Proof of Theorem \ref{Theorem1.2}:}
We first recall the definition of $\tilde{u}_\beta$, namely 
$$
\tilde{u}_\beta=\frac{u_\beta}{\|u_\beta\|_\infty}.
$$
Note that $\tilde{u}_\beta$ solves the equation 
\begin{equation}\label{0.1}
\begin{cases}
-\Delta \tilde{u}_\beta=\|u_\beta\|_{\infty}^{p-1}\tilde{u}_\beta^p,\quad  &\text{in}\ \Omega,\\
\frac{\partial \tilde{u}_\beta}{\partial \nu}+\beta \tilde{u}_\beta=0, &\text{on}\ \partial\Omega.
\end{cases}
\end{equation}
It follows from Lemma \ref{Lemma3.1} that 
$$
\|u_\beta\|_\infty\to \infty,\quad \text{as}\ \beta\to 0.
$$
Then $\|u_\beta\|_\infty^{p-1}\to 0$ as $\beta\to 0$ since $p<1$.
Thus by the fact that $0<\tilde{u}_\beta\le 1$, we can get that 
$$
\tilde{u}_\beta\to \tilde{u}_0,\quad  \text{in} \ C^2_{loc}(\bar{\Omega})
$$ 
and $\tilde{u}_0$ is a solution of 
\begin{equation}\label{0.2}
\begin{cases}
-\Delta \tilde{u}_0=0,\quad  &\text{in}\ \Omega,\\
\frac{\partial \tilde{u}_0}{\partial \nu}=0, &\text{on}\ \partial\Omega,
\end{cases}
\end{equation}
which implies that $\tilde{u}_0\equiv C$ for some constant. Since $0<\tilde{u}_\beta\le 1$, we have $0\le C\le 1$. If $C<1$, then we can get $\sup \tilde{u}_\beta<1$ for $\beta$ small enough. This is a contradiction with the fact that $\|\tilde{u}_\beta\|_\infty\equiv 1$ for any $\beta>0$. Therefore, we prove that $C=1$.

We observe that 
\begin{equation}
\beta\int_{\Omega}u_\beta dS=\int_{\partial\Omega}-\frac{\partial u_\beta}{\partial \nu}dS=\int_{\Omega}-\Delta u_\beta dx=\int_{\Omega}u_\beta^pdx,
\end{equation}
then we have 
\begin{equation}\label{0.4}
\beta\|u_\beta\|_{\infty}\int_{\partial\Omega}\tilde{u}_\beta dS=\|u_\beta\|_{\infty}^p\int_{\Omega}\tilde{u}_\beta^pdx.
\end{equation}
Since $\tilde{u}_\beta\to 1$ in $C^2_{loc}(\bar{\Omega})$, then by \eqref{0.4} we can have 
\begin{equation}\label{0.5}
\beta\|u_\beta\|_{\infty}(|\partial\Omega|+o(1))=\|u_\beta\|_{\infty}^p(|\Omega|+o(1)).
\end{equation}
Therefore, we can derive 
\begin{equation}\label{0.6}
\|u_\beta\|_{\infty}=\beta^{-\frac{1}{1-p}}\Big(|\partial\Omega|^{-\frac{1}{1-p}}|\Omega|^{\frac{1}{1-p}} +o(1)\Big)
\end{equation}
and 
\begin{equation}\label{0.7}
u_\beta=\|u_\beta\|_{\infty}\tilde{u}_\beta=\beta^{-\frac{1}{1-p}}\Big(|\partial\Omega|^{-\frac{1}{1-p}}|\Omega|^{\frac{1}{1-p}} +o(1)\Big).
\end{equation}
\qed

\subsection{The case $p=1$}
In this section,  based on Lemma \ref{lemma2.2}, we will prove the asymptotic behavior of the first eigenfunction.

{\bf Proof of Theorem \ref{Theorem1.3}:} 
Multiplying equation \eqref{1.5} by $u_\beta$, we get 
$$
\int_{\Omega}|\nabla u_\beta|^2dx+\beta\int_{\Omega}u_\beta^2dS=\lambda_{1,\beta}\int_{\Omega}u_\beta^2dx,
$$
which implies that 
$$
\int_{\Omega}|\nabla u_\beta|^2dx\leq \lambda_{1,\beta}\int_{\Omega}u_\beta^2dx\leq \lambda_{1,\beta}|\Omega|.
$$
By using Lemma \ref{lemma2.2}, we get 
$$
\int_{B_1}|\nabla u_\beta|^2dx\to 0,\quad \text{as}\ \beta\to0.
$$
Thus, there exists some constant $C$ such that $u_\beta\to C$ as $\beta\to 0$. Since $0<u_\beta\leq 1$, we must have $0\leq C\leq 1$. If $C<1$, then $\sup u_\beta < 1$ for $\beta>0$ small enough. This contradicts the fact that $\|u_\beta\|_\infty=1$. Hence, we prove that $u_\beta\to 1$ as $\beta\to 0$.
\qed

\subsection{The case $p>1$}\label{s5}
It follows from the proof Theorem \ref{lemma2.5} and Lemma \ref{lemma2.7} that any positive solutions to equation \eqref{eq0.1} converge to 0 uniformaly in $\Omega$ as $\beta\to 0$ if $1<p<2^*-1$. Similarly, utilizing Lemma \ref{lemma2.8} and Lemma \ref{lemma2.6} we can see that the ground state solution converge to 0 uniformaly in $\Omega$ for critical case, and the positive radial solution converge to 0 uniformaly in $\Omega$ for all $p>1$ respectively.
It is natural to ask:  \\
{\em what is the exact asymptotic behavior of these positive solutions?}

In the following, we will prove Theorem \ref{Theorem1.4}.

{\bf Proof of Theorem \ref{Theorem1.4}:}
The proof is standard, we can argue as Theorem \ref{Theorem1.2}. For the convenience of readers, we provide a brief proof here. Firstly, we recall that $\tilde{u}_\beta$ is a solution of equation \eqref{0.1}. Then it follows from Theorem \ref{lemma2.5}  that $u_\beta\to 0$ as $\beta\to 0$. Thus, we can get $\|u_\beta\|^{p-1}_\infty\to 0$ since $p>1$. Moreover by the fact that $0<\tilde{u}_\beta\le 1$, we can deduce that $\tilde{u}_\beta\to \tilde{u}_0$ in $C^2_{loc}(\bar{\Omega})$ as $\beta\to0$. Since $\tilde{u}_0$ solves equation \eqref{0.2}, we can derive  $\tilde{u}_0=1$ by contradiction. Finally, utilizing \eqref{0.4} we can derive  the asymptotic behavior of $u_\beta$.
\qed

\bibliographystyle{abbrv}
\bibliography{ChenGrossiLi.bib} 

\end{document}